\newtheorem{thm}{Theorem}[section]
\newtheorem{prop}[thm]{Proposition}
\newtheorem{lemma}[thm]{Lemma}
\newtheorem{cor}[thm]{Corollary}
\newtheorem{rem}[thm]{Remark}
\theoremstyle{definition}
\newtheorem{defn}[thm]{Definition}
\newtheorem{ex}[thm]{Example}
\newcommand{\z}[1]{\mathbb{Z}_{#1}}
\newcommand{\zzz}{\mathbb{Z}}
\newcommand{\qqq}{\mathbb{Q}}
\newcommand{\fff}{\mathbb{F}}
\newcommand{\rrr}{\mathbb{R}}
\newcommand{\sign}{\operatorname{sign}}
\newcommand{\Vector}[2]{\left[ \begin{array}{r} #1 \\ #2 \end{array} \right]}
\newcommand{\supp}{\operatorname{supp}}
\newcommand{\GpAlg}[2]{\mathbb{#1}\left[ #2 \right]}
\newcommand{\GpAAlg}[2]{#1 \left[ #2 \right]}
\newcommand{\gpAlg}[1]{\GpAlg{Q}{#1}}
\newcommand{\gpRing}[1]{\GpAlg{Z}{#1}}
\newcommand{\cyclicGroupAlg}[1]{\gpAlg{\z{#1}}}
\newcommand{\cyclicGroupRing}[1]{\gpRing{\z{#1}}}
\newcommand{\cyclotomicRing}[1]{\gpRing{\xi_{#1}}}
\newcommand{\cyclotomicField}[1]{\gpAlg{\xi_{#1}}}
\newcommand{\ind}[2]{\operatorname{Ind}^{#1}_{#2}}
\newcommand{\res}[2]{\operatorname{Res}^{#1}_{#2}}
\newcommand{\Hom}[4]{\operatorname{Hom}^{#1}_{#2} \left(#3, #4 \right)}
\newcommand{\Ext}[4]{\operatorname{\mathbb{E}xt}^{#1}_{#2} \left(#3, #4\right)}
\newcommand{\ext}[4]{\operatorname{Ext}^{#1}_{#2} \left(#3, #4\right)}
\newcommand{\cone}{\operatorname{Cone}}
\newcommand{\Tot}{\operatorname{Tot}}
\newcommand{\cross}{\operatorname{Cr}}
\newcommand{\raisebox{-5pt}{
\begingroup%
  \makeatletter%
  \providecommand\color[2][]{%
    \errmessage{(Inkscape) Color is used for the text in Inkscape, but the package 'color.sty' is not loaded}%
    \renewcommand\color[2][]{}%
  }%
  \providecommand\transparent[1]{%
    \errmessage{(Inkscape) Transparency is used (non-zero) for the text in Inkscape, but the package 'transparent.sty' is not loaded}%
    \renewcommand\transparent[1]{}%
  }%
  \providecommand\rotatebox[2]{#2}%
  \ifx\svgwidth\undefined%
    \setlength{\unitlength}{18.70443764bp}%
    \ifx\svgscale\undefined%
      \relax%
    \else%
      \setlength{\unitlength}{\unitlength * \real{\svgscale}}%
    \fi%
  \else%
    \setlength{\unitlength}{\svgwidth}%
  \fi%
  \global\let\svgwidth\undefined%
  \global\let\svgscale\undefined%
  \makeatother%
  \begin{picture}(1,0.91836368)%
    \put(0,0){\includegraphics[width=\unitlength,page=1]{crossing_nonoriented.pdf}}%
  \end{picture}%
\endgroup%
}}{\raisebox{-5pt}{
\begingroup%
  \makeatletter%
  \providecommand\color[2][]{%
    \errmessage{(Inkscape) Color is used for the text in Inkscape, but the package 'color.sty' is not loaded}%
    \renewcommand\color[2][]{}%
  }%
  \providecommand\transparent[1]{%
    \errmessage{(Inkscape) Transparency is used (non-zero) for the text in Inkscape, but the package 'transparent.sty' is not loaded}%
    \renewcommand\transparent[1]{}%
  }%
  \providecommand\rotatebox[2]{#2}%
  \ifx\svgwidth\undefined%
    \setlength{\unitlength}{18.70443764bp}%
    \ifx\svgscale\undefined%
      \relax%
    \else%
      \setlength{\unitlength}{\unitlength * \real{\svgscale}}%
    \fi%
  \else%
    \setlength{\unitlength}{\svgwidth}%
  \fi%
  \global\let\svgwidth\undefined%
  \global\let\svgscale\undefined%
  \makeatother%
  \begin{picture}(1,0.91836368)%
    \put(0,0){\includegraphics[width=\unitlength,page=1]{crossing_nonoriented.pdf}}%
  \end{picture}%
\endgroup%
}}
\newcommand{\raisebox{-5pt}{
\begingroup%
  \makeatletter%
  \providecommand\color[2][]{%
    \errmessage{(Inkscape) Color is used for the text in Inkscape, but the package 'color.sty' is not loaded}%
    \renewcommand\color[2][]{}%
  }%
  \providecommand\transparent[1]{%
    \errmessage{(Inkscape) Transparency is used (non-zero) for the text in Inkscape, but the package 'transparent.sty' is not loaded}%
    \renewcommand\transparent[1]{}%
  }%
  \providecommand\rotatebox[2]{#2}%
  \ifx\svgwidth\undefined%
    \setlength{\unitlength}{18.70443764bp}%
    \ifx\svgscale\undefined%
      \relax%
    \else%
      \setlength{\unitlength}{\unitlength * \real{\svgscale}}%
    \fi%
  \else%
    \setlength{\unitlength}{\svgwidth}%
  \fi%
  \global\let\svgwidth\undefined%
  \global\let\svgscale\undefined%
  \makeatother%
  \begin{picture}(1,0.91836368)%
    \put(0,0){\includegraphics[width=\unitlength,page=1]{0-smoothing.pdf}}%
  \end{picture}%
\endgroup%
}}{\raisebox{-5pt}{
\begingroup%
  \makeatletter%
  \providecommand\color[2][]{%
    \errmessage{(Inkscape) Color is used for the text in Inkscape, but the package 'color.sty' is not loaded}%
    \renewcommand\color[2][]{}%
  }%
  \providecommand\transparent[1]{%
    \errmessage{(Inkscape) Transparency is used (non-zero) for the text in Inkscape, but the package 'transparent.sty' is not loaded}%
    \renewcommand\transparent[1]{}%
  }%
  \providecommand\rotatebox[2]{#2}%
  \ifx\svgwidth\undefined%
    \setlength{\unitlength}{18.70443764bp}%
    \ifx\svgscale\undefined%
      \relax%
    \else%
      \setlength{\unitlength}{\unitlength * \real{\svgscale}}%
    \fi%
  \else%
    \setlength{\unitlength}{\svgwidth}%
  \fi%
  \global\let\svgwidth\undefined%
  \global\let\svgscale\undefined%
  \makeatother%
  \begin{picture}(1,0.91836368)%
    \put(0,0){\includegraphics[width=\unitlength,page=1]{0-smoothing.pdf}}%
  \end{picture}%
\endgroup%
}}
\newcommand{\raisebox{-5pt}{
\begingroup%
  \makeatletter%
  \providecommand\color[2][]{%
    \errmessage{(Inkscape) Color is used for the text in Inkscape, but the package 'color.sty' is not loaded}%
    \renewcommand\color[2][]{}%
  }%
  \providecommand\transparent[1]{%
    \errmessage{(Inkscape) Transparency is used (non-zero) for the text in Inkscape, but the package 'transparent.sty' is not loaded}%
    \renewcommand\transparent[1]{}%
  }%
  \providecommand\rotatebox[2]{#2}%
  \ifx\svgwidth\undefined%
    \setlength{\unitlength}{17.17747609bp}%
    \ifx\svgscale\undefined%
      \relax%
    \else%
      \setlength{\unitlength}{\unitlength * \real{\svgscale}}%
    \fi%
  \else%
    \setlength{\unitlength}{\svgwidth}%
  \fi%
  \global\let\svgwidth\undefined%
  \global\let\svgscale\undefined%
  \makeatother%
  \begin{picture}(1,1.08889324)%
    \put(0,0){\includegraphics[width=\unitlength,page=1]{1-smoothing.pdf}}%
  \end{picture}%
\endgroup%
}}{\raisebox{-5pt}{
\begingroup%
  \makeatletter%
  \providecommand\color[2][]{%
    \errmessage{(Inkscape) Color is used for the text in Inkscape, but the package 'color.sty' is not loaded}%
    \renewcommand\color[2][]{}%
  }%
  \providecommand\transparent[1]{%
    \errmessage{(Inkscape) Transparency is used (non-zero) for the text in Inkscape, but the package 'transparent.sty' is not loaded}%
    \renewcommand\transparent[1]{}%
  }%
  \providecommand\rotatebox[2]{#2}%
  \ifx\svgwidth\undefined%
    \setlength{\unitlength}{17.17747609bp}%
    \ifx\svgscale\undefined%
      \relax%
    \else%
      \setlength{\unitlength}{\unitlength * \real{\svgscale}}%
    \fi%
  \else%
    \setlength{\unitlength}{\svgwidth}%
  \fi%
  \global\let\svgwidth\undefined%
  \global\let\svgscale\undefined%
  \makeatother%
  \begin{picture}(1,1.08889324)%
    \put(0,0){\includegraphics[width=\unitlength,page=1]{1-smoothing.pdf}}%
  \end{picture}%
\endgroup%
}}
\newcommand{\raisebox{-5pt}{
\begingroup%
  \makeatletter%
  \providecommand\color[2][]{%
    \errmessage{(Inkscape) Color is used for the text in Inkscape, but the package 'color.sty' is not loaded}%
    \renewcommand\color[2][]{}%
  }%
  \providecommand\transparent[1]{%
    \errmessage{(Inkscape) Transparency is used (non-zero) for the text in Inkscape, but the package 'transparent.sty' is not loaded}%
    \renewcommand\transparent[1]{}%
  }%
  \providecommand\rotatebox[2]{#2}%
  \ifx\svgwidth\undefined%
    \setlength{\unitlength}{18.70443764bp}%
    \ifx\svgscale\undefined%
      \relax%
    \else%
      \setlength{\unitlength}{\unitlength * \real{\svgscale}}%
    \fi%
  \else%
    \setlength{\unitlength}{\svgwidth}%
  \fi%
  \global\let\svgwidth\undefined%
  \global\let\svgscale\undefined%
  \makeatother%
  \begin{picture}(1,0.91836368)%
    \put(0,0){\includegraphics[width=\unitlength,page=1]{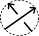}}%
  \end{picture}%
\endgroup%
}}{\raisebox{-5pt}{
\begingroup%
  \makeatletter%
  \providecommand\color[2][]{%
    \errmessage{(Inkscape) Color is used for the text in Inkscape, but the package 'color.sty' is not loaded}%
    \renewcommand\color[2][]{}%
  }%
  \providecommand\transparent[1]{%
    \errmessage{(Inkscape) Transparency is used (non-zero) for the text in Inkscape, but the package 'transparent.sty' is not loaded}%
    \renewcommand\transparent[1]{}%
  }%
  \providecommand\rotatebox[2]{#2}%
  \ifx\svgwidth\undefined%
    \setlength{\unitlength}{18.70443764bp}%
    \ifx\svgscale\undefined%
      \relax%
    \else%
      \setlength{\unitlength}{\unitlength * \real{\svgscale}}%
    \fi%
  \else%
    \setlength{\unitlength}{\svgwidth}%
  \fi%
  \global\let\svgwidth\undefined%
  \global\let\svgscale\undefined%
  \makeatother%
  \begin{picture}(1,0.91836368)%
    \put(0,0){\includegraphics[width=\unitlength,page=1]{crossing_pos.pdf}}%
  \end{picture}%
\endgroup%
}}
\newcommand{\raisebox{-5pt}{
\begingroup%
  \makeatletter%
  \providecommand\color[2][]{%
    \errmessage{(Inkscape) Color is used for the text in Inkscape, but the package 'color.sty' is not loaded}%
    \renewcommand\color[2][]{}%
  }%
  \providecommand\transparent[1]{%
    \errmessage{(Inkscape) Transparency is used (non-zero) for the text in Inkscape, but the package 'transparent.sty' is not loaded}%
    \renewcommand\transparent[1]{}%
  }%
  \providecommand\rotatebox[2]{#2}%
  \ifx\svgwidth\undefined%
    \setlength{\unitlength}{18.70443764bp}%
    \ifx\svgscale\undefined%
      \relax%
    \else%
      \setlength{\unitlength}{\unitlength * \real{\svgscale}}%
    \fi%
  \else%
    \setlength{\unitlength}{\svgwidth}%
  \fi%
  \global\let\svgwidth\undefined%
  \global\let\svgscale\undefined%
  \makeatother%
  \begin{picture}(1,0.91836368)%
    \put(0,0){\includegraphics[width=\unitlength,page=1]{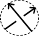}}%
  \end{picture}%
\endgroup%
}}{\raisebox{-5pt}{
\begingroup%
  \makeatletter%
  \providecommand\color[2][]{%
    \errmessage{(Inkscape) Color is used for the text in Inkscape, but the package 'color.sty' is not loaded}%
    \renewcommand\color[2][]{}%
  }%
  \providecommand\transparent[1]{%
    \errmessage{(Inkscape) Transparency is used (non-zero) for the text in Inkscape, but the package 'transparent.sty' is not loaded}%
    \renewcommand\transparent[1]{}%
  }%
  \providecommand\rotatebox[2]{#2}%
  \ifx\svgwidth\undefined%
    \setlength{\unitlength}{18.70443764bp}%
    \ifx\svgscale\undefined%
      \relax%
    \else%
      \setlength{\unitlength}{\unitlength * \real{\svgscale}}%
    \fi%
  \else%
    \setlength{\unitlength}{\svgwidth}%
  \fi%
  \global\let\svgwidth\undefined%
  \global\let\svgscale\undefined%
  \makeatother%
  \begin{picture}(1,0.91836368)%
    \put(0,0){\includegraphics[width=\unitlength,page=1]{crossing_neg.pdf}}%
  \end{picture}%
\endgroup%
}}
\newcommand{\raisebox{-5pt}{
\begingroup%
  \makeatletter%
  \providecommand\color[2][]{%
    \errmessage{(Inkscape) Color is used for the text in Inkscape, but the package 'color.sty' is not loaded}%
    \renewcommand\color[2][]{}%
  }%
  \providecommand\transparent[1]{%
    \errmessage{(Inkscape) Transparency is used (non-zero) for the text in Inkscape, but the package 'transparent.sty' is not loaded}%
    \renewcommand\transparent[1]{}%
  }%
  \providecommand\rotatebox[2]{#2}%
  \ifx\svgwidth\undefined%
    \setlength{\unitlength}{18.70443764bp}%
    \ifx\svgscale\undefined%
      \relax%
    \else%
      \setlength{\unitlength}{\unitlength * \real{\svgscale}}%
    \fi%
  \else%
    \setlength{\unitlength}{\svgwidth}%
  \fi%
  \global\let\svgwidth\undefined%
  \global\let\svgscale\undefined%
  \makeatother%
  \begin{picture}(1,0.91836368)%
    \put(0,0){\includegraphics[width=\unitlength,page=1]{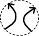}}%
  \end{picture}%
\endgroup%
}}{\raisebox{-5pt}{
\begingroup%
  \makeatletter%
  \providecommand\color[2][]{%
    \errmessage{(Inkscape) Color is used for the text in Inkscape, but the package 'color.sty' is not loaded}%
    \renewcommand\color[2][]{}%
  }%
  \providecommand\transparent[1]{%
    \errmessage{(Inkscape) Transparency is used (non-zero) for the text in Inkscape, but the package 'transparent.sty' is not loaded}%
    \renewcommand\transparent[1]{}%
  }%
  \providecommand\rotatebox[2]{#2}%
  \ifx\svgwidth\undefined%
    \setlength{\unitlength}{18.70443764bp}%
    \ifx\svgscale\undefined%
      \relax%
    \else%
      \setlength{\unitlength}{\unitlength * \real{\svgscale}}%
    \fi%
  \else%
    \setlength{\unitlength}{\svgwidth}%
  \fi%
  \global\let\svgwidth\undefined%
  \global\let\svgscale\undefined%
  \makeatother%
  \begin{picture}(1,0.91836368)%
    \put(0,0){\includegraphics[width=\unitlength,page=1]{orient_resolution.pdf}}%
  \end{picture}%
\endgroup%
}}
\newcommand{\raisebox{-5pt}{
\begingroup%
  \makeatletter%
  \providecommand\color[2][]{%
    \errmessage{(Inkscape) Color is used for the text in Inkscape, but the package 'color.sty' is not loaded}%
    \renewcommand\color[2][]{}%
  }%
  \providecommand\transparent[1]{%
    \errmessage{(Inkscape) Transparency is used (non-zero) for the text in Inkscape, but the package 'transparent.sty' is not loaded}%
    \renewcommand\transparent[1]{}%
  }%
  \providecommand\rotatebox[2]{#2}%
  \ifx\svgwidth\undefined%
    \setlength{\unitlength}{18.70443764bp}%
    \ifx\svgscale\undefined%
      \relax%
    \else%
      \setlength{\unitlength}{\unitlength * \real{\svgscale}}%
    \fi%
  \else%
    \setlength{\unitlength}{\svgwidth}%
  \fi%
  \global\let\svgwidth\undefined%
  \global\let\svgscale\undefined%
  \makeatother%
  \begin{picture}(1,0.91836368)%
    \put(0,0){\includegraphics[width=\unitlength,page=1]{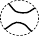}}%
  \end{picture}%
\endgroup%
}}{\raisebox{-5pt}{
\begingroup%
  \makeatletter%
  \providecommand\color[2][]{%
    \errmessage{(Inkscape) Color is used for the text in Inkscape, but the package 'color.sty' is not loaded}%
    \renewcommand\color[2][]{}%
  }%
  \providecommand\transparent[1]{%
    \errmessage{(Inkscape) Transparency is used (non-zero) for the text in Inkscape, but the package 'transparent.sty' is not loaded}%
    \renewcommand\transparent[1]{}%
  }%
  \providecommand\rotatebox[2]{#2}%
  \ifx\svgwidth\undefined%
    \setlength{\unitlength}{18.70443764bp}%
    \ifx\svgscale\undefined%
      \relax%
    \else%
      \setlength{\unitlength}{\unitlength * \real{\svgscale}}%
    \fi%
  \else%
    \setlength{\unitlength}{\svgwidth}%
  \fi%
  \global\let\svgwidth\undefined%
  \global\let\svgscale\undefined%
  \makeatother%
  \begin{picture}(1,0.91836368)%
    \put(0,0){\includegraphics[width=\unitlength,page=1]{nonorient_resolution.pdf}}%
  \end{picture}%
\endgroup%
}}
\newcommand{\ccc}{\overline{\operatorname{CKh}}}
\newcommand{\cc}{\operatorname{CKh}}
\newcommand{\kh}{\operatorname{Kh}}
\newcommand{\equivKh}[4]{\kh_{\z{#1}}^{#2,#3,#4}}
\newcommand{\khp}{\operatorname{KhP}}
\newcommand{\bnbracket}[1]{[\kern-1.5pt [ #1 ]\kern-1.5pt]_{\operatorname{BN}}}
\newcommand{\khbracket}[1]{[\kern-1.5pt [ #1 ]\kern-1.5pt]_{\operatorname{Kh}}}
\newcommand{\One}{\operatorname{1 \kern-3.75pt 1}}
\newcommand{\StacksTag}[2]{\href{http://stacks.math.columbia.edu/tag/#1}{#2}}
\title{Equivariant Khovanov homology of periodic links}
\author{Wojciech Politarczyk}
\address{Department of Mathematics and Computer Science \\
  Adam Mickiewicz University in Pozna\'n \\
ul. Umultowska 87, 61-614 Pozna\'n, Poland}
\email{politarw@amu.edu.pl}
\subjclass[2010]{primary: 57M27, secondary: 57M25, 57M60, 55N91, 18G40}
\begin{document}
\maketitle

\begin{abstract}
  The purpose of this paper is to construct and study equivariant Khovanov homology – a version of Khovanov homology theory for periodic links.
  Since our construction works regardless of the characteristic of the coefficient ring it generalizes a previous construction by Chbili.
  We establish invariance under equivariant isotopies of links and study algebraic properties of integral and rational version of the homology theory.
  Moreover, we construct a skein spectral sequence converging to equivariant Khovanov homology and use this spectral sequence to compute, as an example, equivariant Khovanov homology of torus links \(T(n,2)\).
\end{abstract}


\section{Introduction}
A link $L$ is \textit{$m$-periodic} if it is invariant under a semi-free action of a cyclic group $\z{m}$ of order \(m\) on the $3$-sphere, i.e., for a fixed link $L$ we are interested in 
diffeomorphisms $f \colon (S^{3},L) \to (S^{3},L)$ of finite order, with the property that $L$ is 
disjoint from the fixed point set of $f$. Due to the resolution of the Smith 
Conjecture, see~\cite{BassMorgan},
existence of such symmetry can be rephrased in the following 
way. Let $\rho_{n}$ be the rotation of $\mathbb{R}^{3}$ by the $\frac{2\pi}{n}$ angle about the $OZ$ 
axis. We are interested in links $L \subset \mathbb{R}^{3}$, which are disjoint from the $OZ$ axis 
and invariant under $\rho_{n}$.

In recent years there has been an interest in studying periodic links with the aid of homological links invariant, see~\cite{borodzik_khovanov_2017,Chbili2,cornish_sutured_2016,Hendricks1,hendricks_flexible_2016,politarczyk_equivariant_2017,SeidelSmith1,zhang_rank_2017}. Many of these papers build upon a previous work on polynomials invariants of periodic links~\cite{Murasugi2,murasugi_jones_1988,Przytycki,Traczyk2,Traczyk,Traczyk3}.

The purpose of this paper is is to construct and study a homological invariant of periodic links which we call \emph{equivariant Khovanov homology}. The construction we present is an algebraic analogue of the Borel equivariant cohomology – one of the standard tools in equivariant algebraic topology, see~\cite{tomDieck}. The paper is an abridged version of the author's Ph.D. Thesis~\cite{politarczyk_wojciech_khovanov_2015}.

Firstly, we establish invariance of equivariant Khovanov homology i.e. we prove that it is invariant under equivariant isotopies of periodic links. Our construction generalizes the one from~\cite{Chbili2} because it works regardless of the characteristic of the coefficient ring. Moreover, in~\cite{borodzik_khovanov_2017,politarczyk_equivariant_2017} it was shown that this invariant can be effectively applied to decide whether a link is periodic or not.

Secondly, we construct a \textit{skein spectral sequence} for equivariant Khovanov homology. The skein spectral sequence, in order to preserve the symmetry of the considered diagram, takes into account not a single crossing but an orbit of crossings and all its resolutions.
As an application we compute equivariant Khovanov homology of torus links \(T(n,2)\) over the rational field.

The paper is organized as follows. In Section~\ref{sec:kh-bracket-periodic-links} we study the Bar-Natan's bracket of a periodic link diagram. We exhibit an action of the cyclic group on it and show that it descends to an action on any link homology theory obtained from this complex via TQFT. Moreover, we study equivariant isotopies of periodic diagrams: we define equivariant Reidemeister moves and equivariant version of Reidemeister theorem. In Section~\ref{sec: equiv kh} we give a construction of equivariant Khovanov homology, and prove its invariance under equivariant Reidemeister moves. Section~\ref{subsec: spectral sequence} is devoted to the construction of the skein spectral sequence.
Section~\ref{sec:computations} contains computations of equivariant Khovanov homology of torus links \(T(n,2)\).

\subsection*{Acknowledgements}
I would like to express my gratitude to my Ph.D. advisor Prof. Krzysztof Pawałowski for his guidance and support.
I am also grateful to Józef Przytycki for suggesting me to take up this project and to Maciej Borodzik for many suggestions and corrections which improved the paper.

\section{Khovanov bracket of a periodic links}
\label{sec:kh-bracket-periodic-links}

Let us first recall the definition of a periodic link.
\begin{defn}
Let $m > 1$ be an integer, and let $L$ be a link in $S^{3}$. We say that $L$ is \emph{$m$-periodic}, if there exists an action of the cyclic group \(\z{m}\) of order $m$ on $S^3$ satisfying the following conditions
\begin{enumerate}
\item The fixed point set, denoted by $F$, is an unknot,
\item $L$ is disjoint from $F$,
\item $L$ is a $\z{m}$-invariant subset of $S^3$.
\end{enumerate}
Analogously, we say that a link diagram \(\mathcal{D} \subset \rrr^2\) is \emph{\(m\)-periodic} if it is invariant under a rotation of \(\rrr^2\) of order \(m\) and it is disjoint from the center of this rotation.
\end{defn}

\begin{figure}
  \centering
  \def\svgscale{0.5}
  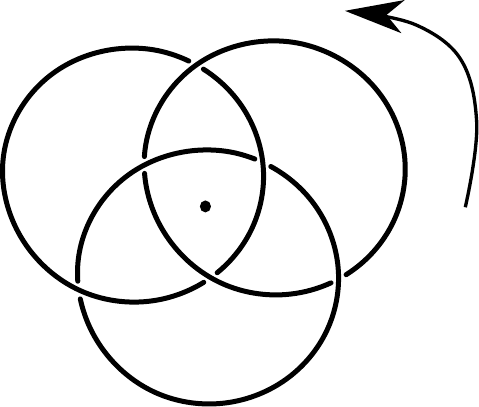
  \caption{Borromean rings are $3$-periodic. The fixed point axis $F$ is marked with a dot.}
\label{fig: bor rings 3-periodic}
\end{figure}

\begin{ex}
  As an example consider Borromean rings. This link is $3$-periodic. The symmetry is visualized on Figure 
  \ref{fig: bor rings 3-periodic}. The dot marks the fixed point axis.
\end{ex}

\begin{ex}\label{ex: periodic torus links}
  Torus links constitute an infinite family of periodic links. In fact, according to \cite{Murasugi2},
  the torus link $T(m,n)$ is $d$-periodic if, and only if, $d$ divides either $m$ or $n$.
\end{ex}

In the remainder part of this section we will study \emph{Khovanov bracket}, denoted by \(\khbracket{\mathcal{D}}\), of a periodic link diagram~\(\mathcal{D}\). We will assume that the reader is familiar with this concept. Moreover, we will use the language of \emph{planar algebras}. We refer the reader who is unfamiliar with these concepts to~\cite{BarNatan}.

For a link diagram \(\mathcal{D}\) we will denote by \(n(\mathcal{D})\), \(n_{+}(\mathcal{D})\) and \(n_{-}(\mathcal{D})\) the number of all, positive and negative crossings of \(\mathcal{D}\), respectively.

Periodic diagrams of periodic links can be described conveniently in terms of planar algebras.
Let us denote by \(\mathcal{D}_m\) the $m$-periodic \emph{planar arc diagram} with $m$~input disks which does not contain any closed arcs, see Figure~\ref{fig: planar diagram periodic} for an example with \(m=4\).
Choose a tangle diagram $\mathcal{T}$ with sufficient number of endpoints and glue $m$ copies of $\mathcal{T}$ into the input disks of $\mathcal{D}_{m}$. In this way, we obtain a periodic link diagram, denoted by $\mathcal{D}_{m}(\mathcal{T})$, quotient of which is represented by an appropriate closure of $\mathcal{T}$. See Figure \ref{fig: planar diagram periodic link} for an example.

\begin{figure}
\centering
\def\svgscale{0.6}  
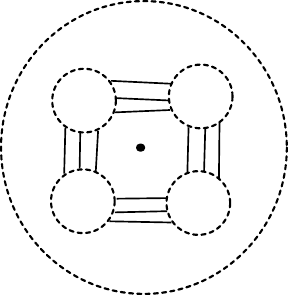
\caption{$4$-periodic planar diagram.}
\label{fig: planar diagram periodic}
\end{figure}

\begin{figure}
\centering
\def\svgscale{0.6}
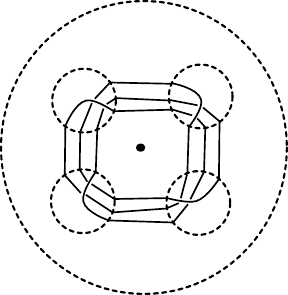
\caption{Torus knot $T(3,4)$ as a $4$-periodic knot obtained from the planar diagram from Figure 
\ref{fig: planar diagram periodic}}
\label{fig: planar diagram periodic link}
\end{figure}
Using the above description of periodic links, it is possible to exhibit a cobordism that induces an action of $\z{m}$ on the Khovanov bracket $\khbracket{\mathcal{D}}$, for $\mathcal{D}$ a periodic link diagram.
Firstly, notice that we can assume that $\mathcal{D}$ represents a link in $D^{2} \times I$ and \(\z{m}\) rotates the $D^{2}$ factor, where $D^{2}$ denotes a $2$-disk.
Secondly, note that the diffeomorphism, denote it by $f$, generating the $\z{m}$-symmetry of $D^{2} \times I$, is isotopic to the identity.
This isotopy can be chosen in such a way that it changes the angle of rotation linearly from $0$ to $\frac{2\pi}{m}$.
Denote this isotopy by $H$.
The cobordism in question is the trace of~$H$
\begin{linenomath*}
$$\Sigma_{H} = \{(H(x, t), t) \in D^{2} \times I \times I \colon x \in L, \quad t \in I \}.$$  
\end{linenomath*}
Topologically, $\Sigma_{H}$ is an embedded cylinder $S^{1} \times I$, however it is not isotopic, rel~boundary, to the trace of the constant isotopy $\Sigma_{H_{0}}$.

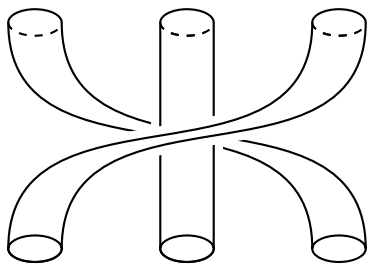
\begin{figure}
\centering
\begin{tikzpicture}
\begin{scope}[tqft/cobordism/.style={draw, thick},
tqft/every incoming lower boundary component/.style={draw, thick, dashed},
tqft/every outgoing lower boundary component/.style={draw, thick}]
\pic[tqft,
    at = {(0,0)},
    incoming boundary components = 1,
    outgoing boundary components = 1,
    offset = 2,
    genus = 0,
    cobordism height = 3cm];
\begin{scope}[tqft/cobordism outer path/.style={draw, white, line width=7pt}]
\pic[tqft/cylinder, at = {(2,0)}, cobordism height = 3cm];
\end{scope}
\pic[tqft/cylinder, at = {(2,0)}, cobordism height = 3cm];
\begin{scope}[tqft/cobordism outer path/.style={draw, white, line width=7pt}]
\pic[tqft,
    at = {(4,0)},
    incoming boundary components = 1,
    outgoing boundary components = 1,
    offset = -2,
    genus = 0,
    cobordism height = 3cm];
\end{scope}
\pic[tqft,
    at = {(4,0)},
    incoming boundary components = 1,
    outgoing boundary components = 1,
    offset = -2,
    genus = 0,
    cobordism height = 3cm];
\end{scope}
\end{tikzpicture}
\caption{Periodic Kauffman state with $3$ components and symmetry of order $2$. Middle cylinder 
contains the fixed point axis $F$.}
\label{fig: periodic Kauffman state}
\end{figure}

Let us now proceed to the description of the map induced by $\Sigma_{H}$ on \(\khbracket{\mathcal{D}}\).
First, we need to number crossings of $\mathcal{D}$.
Next, consider two vector spaces: $W$~which is spanned by the crossings of $\mathcal{D}$ and its exterior algebra $V = \Lambda^{\ast} W$.
Define the \textit{distinguished basis} of $V$ to be the basis consisting of vectors of the form
\begin{linenomath*}
$$c_{i_{1}} \wedge c_{i_{2}} \wedge \ldots \wedge c_{i_{k}}, \quad i_{1} < i_{2} < \ldots < i_{k}$$  
\end{linenomath*}
where $c_{i_{j}} \in \cross{\mathcal{D}}$ is a crossing of $\mathcal{D}$.
Notice that to each element $v$ of the distinguished basis we can associate a unique smoothing $\mathcal{D}_{v}$ of \(\mathcal{D}\) in the following way.
Let
\begin{linenomath*}
$$v = c_{i_1} \wedge c_{i_2} \wedge \ldots \wedge c_{i_k}, \quad i_1 < i_2 < \ldots < i_k,$$  
\end{linenomath*}
then $\mathcal{D}_{v}$ is obtained from $\mathcal{D}$ by smoothing crossings $c_{i_1}, c_{i_{2}}, \ldots, c_{i_{k}}$ with $1$-smoothing and the remainder crossings with $0$-smoothing, see Figure \ref{fig:smoothings}.

\begin{figure}
  \centering
  \begin{tikzpicture}
    \matrix(m)[matrix of math nodes, column sep=2cm] {
      \def\svgscale{1.2}\raisebox{-5pt}{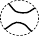} & \def\svgscale{1.2}\raisebox{-5pt}{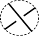}{} & \def\svgscale{1.2}\raisebox{-5pt}{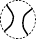} \\
    };
    \path[->]
    (m-1-2) edge node {} (m-1-1)
    (m-1-2) edge node {} (m-1-3);
  \end{tikzpicture}
  \caption{$0$-smoothing and $1$-smoothing}
  \label{fig:smoothings}
\end{figure}

Let $\mathcal{T}$ be a tangle diagram and let $\mathcal{D} = \mathcal{D}_{m}(\mathcal{T})$.
Denote by $W_{\mathcal{T}}$ and $W_{\mathcal{D}}$ the vector space spanned by crossings of \(\mathcal{T}\) and \(\mathcal{D}\), respectively.
Under these assumptions 
\begin{linenomath*}
$$W_{\mathcal{D}} \cong W_{\mathcal{T}}^{m}, \quad \mbox{ and } \quad \Lambda^{\ast} W_{\mathcal{D}} \cong \left(\Lambda^{\ast} 
W_{\mathcal{T}}\right)^{\otimes m}.$$  
\end{linenomath*}
Symmetry of $\mathcal{D}$ induces an action of $\z{m}$ on $\Lambda^{\ast} W_{\mathcal{D}}$ which permutes factors in the tensor product above.
Cobordism $\Sigma_{H}$ discussed above induces a map
\begin{linenomath*}
$$\Sigma_{H} \colon \khbracket{\mathcal{D}} \to \khbracket{\mathcal{D}}$$  
\end{linenomath*}
which permutes Kauffman states of $\mathcal{D}$.
This permutation is compatible with the induced action on $\Lambda^{\ast} W_{\mathcal{D}}$.
Geometrically, the map $\Sigma_{H}|_{\khbracket{\mathcal{D}}^{r}}$ is induced, up to sign, by a ``permutation'' cobordisms similar to the one in Figure~\ref{fig: periodic Kauffman state}.

Let us define a map
\begin{linenomath*}
\begin{align}\label{eqn: permutation map}
\psi \colon \left(\Lambda^{\ast} W_{\mathcal{T}}\right)^{\otimes m} &\to \left(\Lambda^{\ast} W_{\mathcal{T}}\right)^{\otimes m} \\
\psi \colon x_{1} \otimes x_{2} \otimes \ldots \otimes x_{m} &\mapsto (-1)^{\alpha} x_{2} \otimes \ldots \otimes x_{m} \otimes x_{1}, \quad x_{i} \in W_{\mathcal{T}}, \nonumber
\end{align}  
\end{linenomath*}
where
\begin{linenomath*}
$$\alpha = (m-1) n_{-}(\mathcal{T}) + \deg x_{1} (\deg x_{2} + \deg x_{3} + \ldots + \deg x_{m}).$$  
\end{linenomath*}
Above, $n_{-}(\mathcal{T})$ stands for the number of negative crossings of $\mathcal{T}$.
Automorphism $\psi$ maps any vector from the distinguished basis of $\Lambda^{\ast} W_{\mathcal{D}}$ to $\pm 1$ multiplicity of some other vector from the basis.
\begin{linenomath*}
$$\psi(v) = \sign(\psi,v) w$$  
\end{linenomath*}
We can use these signs to change the definition of $\Sigma_{H}$ as follows.
\begin{linenomath*}
\begin{align}
\Sigma_{H}|_{D_{v}} &\colon \mathcal{D}_{v} \to \mathcal{D}_{w}, \nonumber \\
\Sigma_{H}|_{D_{v}} &= \sign(\psi,v) \Sigma_{v,w}, \label{eqn: Kauffman states}
\end{align}  
\end{linenomath*}
where $\Sigma_{v,w}$ denotes the appropriate permutation cobordism. This discussion leads to the following proposition.

\begin{prop}\label{prop: periodic cochain complex}
  Let \(\mathcal{D}\) be an \(m\)-periodic link diagram, then Khovanov bracket \(\khbracket{\mathcal{D}}\) admits an action of \(\z{m}\). 
\end{prop}

\begin{rem}
  This sign convention we use was implicitly described in~\cite{BarNatan}.
\end{rem}

\begin{rem}
  The fact that the action of \(\z{m}\) on the Khovanov bracket is well-defined can be proven using functoriality of Khovanov homology established by~\cite{vogel_functoriality_2015}.
\end{rem}

\begin{proof}[Proof of Proposition~\ref{prop: periodic cochain complex}]
The only thing left to prove, is that $\Sigma_{H}$ commutes with the differential.
Geometric properties of the Khovanov bracket imply that the components of both maps commute up to sign.

Let $x_{1}, \ldots, x_{m} \in V_{\mathcal{T}}$ be homogeneous vectors. Consider the following linear maps
\begin{linenomath*}
\begin{align*}
d_{\mathcal{D}} \colon \left(\Lambda^{\ast} W_{\mathcal{T}}\right)^{\otimes m} &\to \left(\Lambda^{\ast} W_{\mathcal{T}}\right)^{\otimes m}, \\
d_{\mathcal{D}} \colon x_{1} \otimes \ldots \otimes x_{m} &\mapsto \sum_{i=1}^{m}(-1)^{\alpha_{i}}x_{1} \otimes \ldots \otimes d_{\mathcal{T}}(x_{i}) \otimes \ldots \otimes x_{m}, \\
\sigma_{i} \colon \left(\Lambda^{\ast} W_{\mathcal{T}}\right)^{\otimes m} &\to \left(\Lambda^{\ast} W_{\mathcal{T}}\right)^{\otimes m}, \\
\sigma_{i} \colon x_{1} \otimes \ldots \otimes x_{m} &\mapsto (-1)^{\deg x_{i} \cdot \deg x_{i+1}}x_{1} \otimes \ldots \otimes x_{i+1} \otimes x_{i} \otimes \ldots \otimes x_{m},\\
\widetilde{\sigma_{i}} &= (-1)^{n_{-}(\mathcal{T})} \sigma_{i}. 
\end{align*}  
\end{linenomath*}
where $1 \leq i \leq m-1$, $\alpha_{i} = (-1)^{\deg(x_{m}) + \ldots + \deg(x_{i+1})}$ and
\begin{linenomath*}
$$d_{\mathcal{T}}(w) = \sum_{v \in \cross{\mathcal{T}}} w \wedge v.$$  
\end{linenomath*}
The map $\psi$ from (\ref{eqn: permutation map}) is a composition of the maps $\widetilde{\sigma}_{i}$
\begin{linenomath*}
  $$\psi = \widetilde{\sigma}_{m-1} \circ \widetilde{\sigma}_{m-2} \circ \ldots \circ \widetilde{\sigma}_{1}.$$  
\end{linenomath*}
The map $d_{\mathcal{D}}$ corresponds to the differential, i.e. let
\begin{linenomath*}
  $$d_{r-n_{-}(\mathcal{D})} \colon \khbracket{D}^{r - n_{-}(\mathcal{D})} \to \khbracket{\mathcal{D}}^{r+1-n_{-}(\mathcal{D})}$$  
\end{linenomath*}
be the differential in the Khovanov bracket. It is defined
\begin{linenomath*}
  $$d_{r-n_{-}(\mathcal{D})} = \sum_{(v,w)} \sign(v,w) \Sigma_{(v,w)},$$  
\end{linenomath*}
where the summation extends over pairs $(v,w)$ consisting of $v \in \cross{\mathcal{D}}$ and $w$ a is vector from the distinguished basis of $\Lambda^{\ast} W_{D}$, such that $v \wedge w \neq 0$.
Moreover, $\Sigma_{(v,w)}$ is an appropriate elementary cobordism
\begin{linenomath*}
  $$\Sigma_{(v,w)} \colon \mathcal{D}_{w} \to \mathcal{D}_{w \wedge v}.$$  
\end{linenomath*}
It is not hard to check, that the coefficient $\sign(v,w)$ is equal to the coefficient of $v \wedge w$ in $d_{\mathcal{D}}(v)$.
Consequently, in order to finish the proof it is sufficient to check that for $1 \leq i \leq n-1$ the following equality holds
$$\widetilde{\sigma}_{i} \circ d_{D} = d_{D} \circ\widetilde{ \sigma}_{i}.$$
This can be verified by an elementary calculation.
\end{proof}

\begin{figure}
  \centering
  \includegraphics[width = 10cm]{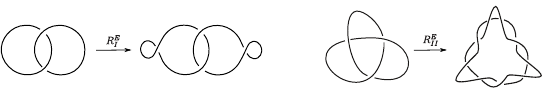}
  \caption{Examples of equivariant Reidemeister moves.}
  \label{fig:equiv-reidemeister-moves}
\end{figure}

Let $\mathcal{D} = \mathcal{D}_{m}(\mathcal{T})$ be an $m$-periodic link diagram obtained from a tangle $\mathcal{T}$. 
Suppose that $\mathcal{T}'$ is tangle obtained from \(\mathcal{T}\) by a single application of one of the Reidemeister moves.
Form another link diagram $\mathcal{D}' = \mathcal{D}_{m}(\mathcal{T}')$. We say that $\mathcal{D}'$ was obtained from $\mathcal{D}$ by an application of an \textit{equivariant Reidemeister move}, see Figure~\ref{fig:equiv-reidemeister-moves}.
Analogously, we can define equivariant planar isotopy for periodic link diagrams.

\begin{prop}\label{prop: equiv reidemeister moves}
  Let $L$ and $L'$ be two $m$-periodic links and let $\mathcal{D}$ and $\mathcal{D}'$ be two $m$-periodic diagrams representing $L$ and $L'$, respectively.
  Every equivariant isotopy from $L$ to $L'$ can be realized by a sequence of equivariant Reidemeister moves from $\mathcal{D}$ to $\mathcal{D}'$.
\end{prop}

\begin{proof}[Proof of Proposition \ref{prop: equiv reidemeister moves}]
If $L$ and $L'$ are equivariantly isotopic, they are also isotopic in the complement of some tubular neighborhood of the fixed point axis. The respective quotient links are also isotopic in the complement of the fixed point axis. The sequence of Reidemeister moves connecting quotient diagrams can be lifted to a sequence of equivariant Reidemeister moves connecting $\mathcal{D}$ and $\mathcal{D}'$.
\end{proof}

\begin{cor}
  Let \(\mathcal{F}\) be any TQFT functor on the category of dotted cobordisms taking values in the category of modules over a commutative ring with unit \(R\).
  If \(\mathcal{D}\) is an \(m\)-periodic link diagram, the \(\mathcal{F}(\khbracket{\mathcal{D}})\) is a complex of \(R[\z{m}]\)-modules.
  In particular, Khovanov complex \(\cc(\mathcal{D};R)\) is a complex of \(R[\z{m}]\)-modules.
\end{cor}

In the nonequivariant setting Bar-Natan~\cite{BarNatan} proved that Reidemeister moves induce chain homotopy equivalences of Khovanov brackets of respective diagrams.
This result can be used to prove invariance of Khovanov bracket of a periodic link up to equivariant Reidemeister moves.

\begin{thm}\label{thm: reidemeister quasi-isomorphism}
Assume that $\mathcal{D}'$ is a periodic link diagrams obtained from a periodic link diagram $\mathcal{D}$ by a single Reidemeister move. Let $\mathcal{F}$ be a TQFT functor whose target is the category of $R$-modules. The map
\begin{linenomath*}
$$\mathcal{R} \colon \khbracket{\mathcal{D}} \to \khbracket{\mathcal{D}'},$$
\end{linenomath*}
induced by an equivariant Reidemeister move gives a chain homotopy equivalence
\begin{linenomath*}
$$\mathcal{F}(\mathcal{R}) \colon \mathcal{F}(\khbracket{\mathcal{D}}) \to \mathcal{F}(\khbracket{\mathcal{D}'})$$
\end{linenomath*}
in the category of cochain complexes of $\GpAAlg{R}{\z{m}}$-modules.
\end{thm}
\begin{proof}
  First let us prove that $\mathcal{F}(\mathcal{R})$ is a morphism in the category of $\GpAAlg{R}{\z{m}}$-modules.
  To verify this condition, refer to the proof of~\cite{BarNatan}*{Theorem 2}. The bracket $\khbracket{\mathcal{D}}$ is constructed along the lines of the formal tensor product of copies of the complex $\khbracket{\mathcal{D}}$. Any collection of morphisms
\begin{linenomath*}
$$\Sigma_{i} \colon \khbracket{\mathcal{T}} \to \khbracket{\mathcal{T}'},$$
\end{linenomath*}
for $i=1, \ldots, m$, yields a morphism
\begin{linenomath*}
$$\mathcal{D}_{m}(\Sigma_{1},\ldots, \Sigma_{m}) \colon \khbracket{\mathcal{D}} \to \khbracket{\mathcal{D}'}.$$
\end{linenomath*}

Taking into account the symmetry of $\mathcal{D}$ and $\mathcal{D}'$, we obtain the following commutative diagram
\begin{center}
\begin{tikzpicture}
\matrix(m)[matrix of math nodes, row sep = 1cm, column sep=2.5cm]
{
  \khbracket{\mathcal{D}} & \khbracket{\mathcal{D'}} \\
  \khbracket{\mathcal{D}} & \khbracket{\mathcal{D'}} \\
};
\path[->, font=\scriptsize]
(m-1-1) edge node[above] {$\mathcal{D}_{m}(\Sigma_{1}, \ldots, \Sigma_{m})$} (m-1-2)
(m-1-1) edge node[left] {$\Sigma_{\mathcal{D}}$} (m-2-1)
(m-1-2) edge node[right] {$\Sigma_{\mathcal{D}'}$} (m-2-2)
(m-2-1) edge node[above] {$\mathcal{D}_{m}(\Sigma_{2}, \ldots, \Sigma_{m}, \Sigma_{1})$} (m-2-2);
\end{tikzpicture}
\end{center}
where $\Sigma_{\mathcal{D}}$ and $\Sigma_{\mathcal{D}'}$ denote the automorphisms of complexes induced by the action of $\z{m}$. Since $\mathcal{R} = \mathcal{D}_{m}(\mathcal{R}', \ldots, \mathcal{R}')$, where
\begin{linenomath*}
$$\mathcal{R}' \colon \khbracket{\mathcal{T}} \to \khbracket{\mathcal{T}'}$$
\end{linenomath*}
is induced by a single Reidemeister move, it follows that $\mathcal{F}(\mathcal{R})$ is a morphism in the category of $\GpAAlg{R}{\z{m}}$-modules.

Repeating the above argument we can prove that chain homotopies defined in Section~4.1 in~\cite{BarNatan} associated to equivariant Reidemeister moves induce chain homotopies in the category of \(R[\z{m}]\)-modules.
\end{proof}

Remainder part of this section is devoted to more in-depth analysis of the action of $\z{m}$ on the Khovanov bracket. The results below will play an important role in Section~\ref{subsec: spectral sequence}.

\begin{defn}\label{defn: Kauffman states of periodic diagram}
Let $\mathcal{S}(\mathcal{D})$ denote the set of smoothings of link diagram $\mathcal{D}$.
\begin{enumerate}
\item For $0 \leq r \leq \#\cross(\mathcal{D})$, let $\mathcal{S}_{r}(\mathcal{D})$ denote the subset of $\mathcal{S}(\mathcal{D})$ consisting of smoothings with exactly $r$ crossings resolved with $1$-smoothing.
\item For $d \mid m$, let $\mathcal{S}^{d}(\mathcal{D})$ denote the set of $d$-periodic smoothings, that is smoothings of the form
  \begin{linenomath*}
  $$\mathcal{D}_{m}(\underbrace{\mathcal{T}_1, \ldots, \mathcal{T}_{\frac{m}{d}}, \mathcal{T}_{1}, \ldots, \mathcal{T}_{\frac{m}{d}}, 
  \ldots, \mathcal{T}_{1}, \ldots, \mathcal{T}_{\frac{m}{d}}}_{\text{$m$ copies}}),$$  
\end{linenomath*}
where $\mathcal{T}_{1}, \ldots, \mathcal{T}_{\frac{m}{d}}$ are distinct smoothings of $\mathcal{T}$, in particular an element of \(\mathcal{S}^d(\mathcal{D})\) is not invariant under any subgroup of \(\z{m}\) which contains properly \(\z{d}\).
\item Define $\mathcal{S}_{r}^{d}(\mathcal{D}) = \mathcal{S}^{d}(\mathcal{D}) \cap \mathcal{S}_{r}(\mathcal{D})$.
\item Define $\overline{\mathcal{S}}_{r}^{d}(\mathcal{D})$ to be the quotient of $\mathcal{S}_{r}^{d}(\mathcal{D})$ by the induced action of $\z{m}$.
\end{enumerate}
\end{defn}

Before proceeding to the next lemma let us some more notation.
Let \(\zzz_{-}\) be a module over the group ring \(\gpRing{\z{m}}\) such that, if $m$ is even, the generator of the cyclic group acts on $\zzz_{-}$ by multiplication by $-1$, trivial module otherwise. Moreover, if \(d \mid m\), and \(M\) is a \(\zzz[\z{d}]\)-module, define the \emph{induction of \(M\)} to be \[\ind{\z{m}}{\z{d}} M = \zzz[\z{m}] \otimes_{\zzz[\z{d}]} M.\]

\begin{lemma}\label{lemma: action on Khovanov complex}
  Suppose that $\mathcal{F}$ is a TQFT functor defined on the category of dotted cobordisms whose target is the category of $R$-modules, for $R$ a commutative ring with unit.
  Let $\mathfrak{s}_{1}, \ldots, \mathfrak{s}_{\frac{m}{d}} \in \mathcal{S}_{r}^{d}(\mathcal{D})$, for $d \mid \gcd(m,r)$ and $d \geq 1$, be an orbit of smoothings, then
\begin{linenomath*}
$$\bigoplus_{i=1}^{m/d} \mathcal{F}(\khbracket{\mathfrak{s}_{i}}) \cong \ind{\z{m}}{\z{d}}
\left( \mathcal{F}(\khbracket{\mathfrak{s}_{1}}) \otimes_{\zzz} \zzz_{-}^{\otimes s(m,r,d)} \right)$$
\end{linenomath*}
as $\GpAAlg{R}{\z{m}}$-modules, where
\begin{linenomath*}
  $$s(m,r,d) = \frac{(m-1)n_{-}(\mathcal{D}) + r(d-1)}{d}.$$
\end{linenomath*}
\end{lemma}
\begin{proof}
  We will prove that if $\mathfrak{s} \in \mathcal{S}_{r}^{d}(\mathcal{D})$, then $\Sigma_{h}^{\frac{m}{d}}(\mathfrak{s}) = (-1)^{s(m,r,d)}\mathfrak{s}$.
  The orbit of $\mathfrak{s}_{1}$ consists of $m/d$ smoothings which are permuted by the action of $\z{m}$.
  Therefore, the lemma will follow from Proposition~\cite{CurtisReiner}*{Prop. 10.5} once we determine the induced action of $\z{d}$ on $\mathcal{F}(\khbracket{\mathfrak{s}_{1}})$.
  Since $\mathcal{F}(\khbracket{\mathfrak{s}_{1}})$ admits an action of $\z{d}$, the induced action will agree with this one up to sign. 

  Smoothing $\mathfrak{s}_{1}$ corresponds to a vector
  \begin{linenomath*}
    $$w = \underbrace{v \otimes v \otimes \ldots \otimes v}_{d},$$
  \end{linenomath*}
  where $v = v_{1} \otimes v_{2} \otimes \ldots \otimes v_{\frac{m}{d}}$ and $v_{1}, \ldots, v_{\frac{m}{d}} \in \Lambda^{\ast}W_{\mathcal{T}}$ belong to the distinguished basis.
  Consequently, according to the adopted sign convention,
  \begin{linenomath*}
    \begin{align*}
      &\psi^{\frac{m}{d}}(w) = (-1)^{k(d-1) + \frac{n_{-}(\mathcal{T})m(m-1)}{d}} w = (-1)^{\frac{r(d-1) + n_{-}(\mathcal{D})(m-1)}{d}} w,
    \end{align*}
  \end{linenomath*}
  where $k = \deg v_{1} + \deg v_{2} + \ldots + \deg v_{\frac{m}{d}}.$
\end{proof}

\begin{cor}\label{cor: action on khovanov complex}
If $\mathcal{F}$ is as in the previous lemma and $0 \leq r \leq n_{+}(\mathcal{D}) + n_{-}(\mathcal{D})$, then
\begin{linenomath*}
\begin{align*}
&\mathcal{\mathcal{F}}(\khbracket{\mathcal{D}}^{r - n_{-}(\mathcal{D})}) = \\
&\bigoplus_{d \mid \gcd(m,r)} \bigoplus_{s \in \overline{\mathcal{S}}_{r}^{d}} \ind{\z{m}}{\z{d}} \left( \mathcal{F}(\khbracket{\mathfrak{s}})
\otimes_{\zzz} \zzz_{-}^{\otimes s(m,r,d)} \right)\{ r + n_{+}(\mathcal{D}) - n_{-}(\mathcal{D})\}.
\end{align*}
\end{linenomath*}
\end{cor}

\begin{rem}
In the above formula there is a small ambiguity. We identified a smoothing of a periodic link diagram with this orbit.
This notational shortcut does not cause any confusion because all smoothings belonging to the same orbit contribute isomorphic summands to $\khbracket{\mathcal{D}}^{r - n_{-}(\mathcal{D})}$.
\end{rem}

\begin{proof}[Proof of Corollary \ref{cor: action on khovanov complex}]
Since for distinct $d_1, d_2 \mid \gcd(m,r)$ the sets $\mathcal{S}_{r}^{d_1}(\mathcal{D})$, $\mathcal{S}_{r}^{d_2}(\mathcal{D})$ are disjoint, the corollary follows readily from Lemma \ref{lemma: action on Khovanov complex}.
\end{proof}

\section{Equivariant Khovanov homology}
\label{sec: equiv kh}

Before proceeding to the main topic of this section let us introduce some notation.
For a doubly-graded module \(M^{\ast,\ast}\) we define another doubly-graded module \(M[k]\{l\}^{\ast,\ast}\), for \(k,l \in \zzz\), to be \(M^{i,j}[k]\{l\} = M^{i-k,j-l}\).

\subsection{Definition and basic properties of equivariant Khovanov homology}
\label{sec:defin-basic-prop-ekh}

Let, as in the previous section, $\mathcal{D}$ be an $m$-periodic link diagram. Suppose also that $R$ is a commutative ring with unit and $M$ is a module over the group ring $\Lambda_m = \GpAAlg{R}{\z{m}}$. By results of the previous section we know that \(\cc(\mathcal{D};R)\) is a complex of \(\Lambda_m\)-modules.
\begin{defn}
  \emph{Equivariant Khovanov homology} of $\mathcal{D}$ is
  \begin{linenomath*}
  $$\kh_{\z{m}}^{\ast,\ast}(\mathcal{D},M) = \Ext{\ast,\ast}{\Lambda_m}{M}{\cc(\mathcal{D},R)}.$$
\end{linenomath*}

  In other words, if we choose a projective resolution $\mathcal{P}_{\ast} \to M$, equivariant Khovanov homology is the cohomology of the following bicomplex
  \begin{linenomath*}
  \begin{align*}
    \ldots &\xrightarrow{d_{\mathcal{P}_{\ast}}} \Hom{}{\Lambda_m}{\mathcal{P}_{i-1}}{\cc(\mathcal{D},R)} \xrightarrow{d_{\mathcal{P}_{\ast}}} \Hom{}{\Lambda_m}{\mathcal{P}_{i}}{\cc(\mathcal{D},R)} \xrightarrow{d_{\mathcal{P}_{\ast}}} \\
           &\xrightarrow{d_{\mathcal{P}_{\ast}}} \Hom{}{\Lambda_m}{\mathcal{P}_{i+1}}{\cc(\mathcal{D},R)} \to \ldots
  \end{align*}
\end{linenomath*}
\end{defn}

In Section~\ref{sec:kh-bracket-periodic-links} we introduced equivariant Reidemeister moves and proved that any equivariant isotopy of periodic links can be realized by a sequence of equivariant Reidemeister moves. The following theorem establishes invariance of equivariant Khovanov homology under equivariant Reidemeister moves.

\begin{thm} \label{thm:invariance_equivariant_homology}
Equivariant Khovanov homology is an invariant of a periodic link, i.e. it is invariant under equivariant Reidemeister moves.
\end{thm}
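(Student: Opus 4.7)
The strategy is simply to chain together the two results already proved earlier in the paper: the equivariant Reidemeister invariance at the level of complexes of $\cyclicGroupRing{n}$-modules, and the quasi-isomorphism invariance of $\operatorname{\mathbb{E}xt}$. Concretely, by Definition \ref{defn: equiv kh} we have
$$\kh_{\z{n}}^{\ast,\ast,d}(L) = \Ext{\ast,\ast}{\cyclicGroupRing{n}}{\gpRing{\xi_{d}}}{\cc(D)},$$
so the task reduces to showing that if $D$ and $D'$ are related by an equivariant Reidemeister move, then the induced map on $\operatorname{\mathbb{E}xt}$ groups with source $\cyclotomicRing{d}$ is an isomorphism.

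First I would apply Theorem \ref{thm: reidemeister quasi-isomorphism} to the Khovanov TQFT $\mathcal{T}$ with target the category of graded $\zzz$-modules. This produces a map
$$\mathcal{T}(\mathcal{R}) \colon \cc(D) \to \cc(D')$$
which is a quasi-isomorphism in the category of bounded cochain complexes of graded $\cyclicGroupRing{n}$-mo\-du\-les. Next, I would invoke part 1 of Proposition \ref{prop: properties of ext}: a quasi-isomorphism between bounded cochain complexes induces isomorphisms on $\operatorname{\mathbb{E}xt}$ in every degree. Applying this with $C^{\ast} = \cyclotomicRing{d}$ (concentrated in degree $0$) and the pair $D^{\ast} = \cc(D)$, $D'^{\ast} = \cc(D')$ immediately yields
$$\Ext{n}{\cyclicGroupRing{n}}{\cyclotomicRing{d}}{\cc(D)} \cong \Ext{n}{\cyclicGroupRing{n}}{\cyclotomicRing{d}}{\cc(D')}$$
for every $n \in \zzz$, which is the desired isomorphism on equivariant Khovanov homology.

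The only additional check is that the internal quantum grading is also preserved. This is automatic: $\cc(D)$ is a complex of \emph{graded} $\cyclicGroupRing{n}$-modules, the Khovanov TQFT is a graded functor so $\mathcal{T}(\mathcal{R})$ preserves the quantum grading degree-wise, and $\cyclotomicRing{d}$ is regarded as concentrated in quantum degree zero; hence the isomorphism produced by Proposition \ref{prop: properties of ext} is bigraded. There is no genuine obstacle in the argument — all of the technical work has already been done in Theorem \ref{thm: reidemeister quasi-isomorphism} (which handles the subtle sign-conventions ensuring the Reidemeister chain map is $\cyclicGroupRing{n}$-linear) and in the derived-category machinery underlying Proposition \ref{prop: properties of ext}. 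The proof is therefore essentially a one-line combination of these two ingredients.
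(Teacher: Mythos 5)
Your proof is correct and follows exactly the same two-step strategy as the paper: apply Theorem \ref{thm: reidemeister quasi-isomorphism} to get a quasi-isomorphism of complexes of graded $\cyclicGroupRing{n}$-modules, then cite part 1 of Proposition \ref{prop: properties of ext} to conclude that $\operatorname{\mathbb{E}xt}$ is invariant under quasi-isomorphism. The extra paragraph checking that the quantum grading is preserved is a reasonable elaboration but adds nothing the paper doesn't already take for granted.
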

\begin{proof}
  Theorem~\ref{thm: reidemeister quasi-isomorphism} implies that a single application of an equivariant Reidemeister move to a periodic link diagram, yields a chain homotopy equivalence of the corresponding Khovanov brackets.
  Therefore, an equivariant Reidemeister move induces a chain homotopy equivalence of the respective Khovanov complexes.
  Properties of Ext groups imply that the homotopy equivalence descends to an isomorphism of equivariant Khovanov homology.
\end{proof}

\begin{figure}
  \centering
  \def\svgscale{0.6}
  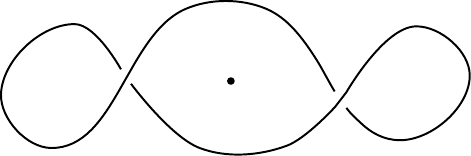
  \caption{$2$-periodic diagram of the unknot and its Khovanov bracket.}
  \label{fig: 2-periodic diagram unknot}
\end{figure}

\begin{rem}
  \label{ex: non homotopy equivalent complexes}
  It is well known that the functor \(\Hom{}{\gpRing{\z{m}}}{M}{-}\), for a \(\gpRing{\z{m}}\)-module \(M\), is in general not exact.
  Lack of exactness of the \(\operatorname{Hom}\) functors forces us to use derived functors of the \(\operatorname{Hom}\) in order to obtain a periodic link invariant. The lack of exactness can be observed for relatively simple examples of links.

  Consider the $2$-periodic diagram $\mathcal{D}$ from Figure~\ref{fig: 2-periodic diagram unknot}. 
  Since
  \begin{linenomath*}
    $$\Hom{}{\gpRing{\z{2}}}{\zzz_{-}}{M} = \{x \in M \colon t \cdot x = -x\},$$
  \end{linenomath*}
  where $t$ denotes the generator of $\z{m}$, we obtain
  \begin{linenomath*}
    \begin{align*}
      &\Hom{}{\gpRing{\z{2}}}{\zzz_{-}}{\cc^{1,\ast}(\mathcal{D})} = \\
      &= \left \langle \Vector{\One \otimes \One}{-\One 
        \otimes \One}, \Vector{\One \otimes X}{-X \otimes \One}, \Vector{X \otimes \One}{- \One \otimes X}, 
        \Vector{X \otimes X}{- X \otimes X} \right \rangle \\
      &\Hom{}{\gpRing{\z{2}}}{\zzz_{-}}{\cc^{2,\ast}(D)} = \left \langle \One, X \right \rangle.
    \end{align*}
  \end{linenomath*}
  Inspection of the differential $d \colon \cc^{1,\ast}(\mathcal{D}) \to \cc^{2,\ast}(\mathcal{D})$ yields
  \begin{linenomath*}
    \begin{align*}
      d \colon \Vector{\One \otimes \One}{- \One \otimes \One} &\mapsto -2 \cdot \One, \\
      d \colon \Vector{\One \otimes X}{- X \otimes \One} &\mapsto -2 \cdot X,
    \end{align*}
  \end{linenomath*}
  therefore
  \begin{linenomath*}
    $$H^{2,\ast}\left( \Hom{}{\gpRing{\z{2}}}{\zzz_{-}}{\cc(\mathcal{D})} \right) = \fff_{2}\{5\} \oplus \fff_{2}\{3\}.$$
  \end{linenomath*}
  On the other hand,
  \begin{linenomath*}
    $$H^{2, \ast}\left( \Hom{}{\gpRing{\z{2}}}{\zzz_{-}}{\cc(\mathcal{U})} \right) = 0,$$
  \end{linenomath*}
  where $\mathcal{U}$ denotes the 2-periodic crossingless unknot diagram.
\end{rem}

Recall that if $C^{\ast}$ and $D^{\ast}$ are bounded chain complexes of $\Lambda_m$-modules and $\mathcal{P}^{\ast} \to C^{\ast}$ is a projective resolution of $C^{\ast}$ and \(\mathcal{Q}^{\ast} \to D^{\ast}\) is a projective resolution of \(D^{\ast}\), then $\Ext{n}{\Lambda_m}{C^{\ast}}{D^{\ast}}$ is isomorphic to the group  of chain homotopy classes of chain maps
\begin{linenomath*}
  $$f \colon \mathcal{P}^{\ast} \to \mathcal{Q}^{\ast}$$
\end{linenomath*}
which increase homological grading by $n$. The composition of chain maps equips $\Gamma(C^{\ast}) = \Ext{\ast}{\Lambda_m}{C^{\ast}}{C^{\ast}}$ with the structure of a graded ring and $\Ext{\ast}{\Lambda_m}{C^{\ast}}{D^{\ast}}$ becomes a graded right module over $\Gamma(C^{\ast})$.
In particular, if we take \(C^{\ast}\) to be the coefficient ring we recover the group cohomology ring \(\Gamma(R) = H^{\ast}(\z{m},R)\).
This leads to the following observation.

\begin{prop}\label{prop:multiplicative-structure}
  Let $\mathcal{D}$ is a periodic link diagram. For any $\Lambda_m$-module $M$, equivariant Khovanov homology $\kh_{\z{m}}^{\ast,\ast}(\mathcal{D},M)$ is a graded module over the ring $\Gamma(M) = \Ext{\ast}{\Lambda_m}{M}{M}$. In particular, $\kh_{\z{m}}(\mathcal{D},R)$ is a graded module over the cohomology ring $H^{\ast}(\z{m},R)$.
\end{prop}

Suppose now that \(\mathcal{D}\) is a periodic diagram of an unlink. Components of \(\mathcal{D}\) can be divided into two categories: components in the first category are invariant under the rotation, component in the second category are freely permuted by \(\z{m}\). We will denote by \(\mathcal{T}_{k,f}\) an \(m\)-periodic unlink diagram consisting of \(k\) free orbits of components and \(f\) fixed components.

\begin{prop}\label{prop:kh-trivial-links}
  For an $m$-periodic unlink diagram \(\mathcal{T}_{k,f}\) and any \(\Lambda_m\)-module \(M\) there exists an isomorphism
  \begin{linenomath*}
    \[\kh_{\z{m}}^{\ast,\ast}(\mathcal{T}_{k,f};M) \cong \Ext{\ast,\ast}{\Lambda_m}{M}{\kh(\mathcal{T}_{k,f};R)}.\]
  \end{linenomath*}
  In particular if \(M = R\) we have
  \begin{linenomath*}
    $$\kh_{\z{m}}^{\ast,\ast}(\mathcal{T}_{k,f}, R) \cong H^{\ast,\ast}(\z{m}, \kh(\mathcal{T}_{k,f},R)).$$
  \end{linenomath*}
  Moreover, if \(k = 0\)
  \[\kh_{\z{m}}^{\ast,\ast}(\mathcal{T}_{0,f}, R) \cong \kh(\mathcal{T}_{0,f};R) \otimes H^{\ast}(\z{m};R).\]
\end{prop}
\begin{proof}
  In order to prove the first claim we can apply Cartan-Eilenberg spectral sequence, see Proposition~5.7.6 in~\cite{Weibel}, converging to equivariant Khovanov homology
  \begin{linenomath*}
    $$E_2^{i,j} = \Ext{i}{\Lambda_m}{M}{\kh^{j,\ast}(\mathcal{D},R)} \Rightarrow \kh_{\z{m}}^{i+j,\ast}(\mathcal{D},M).$$
  \end{linenomath*}
  If $\mathcal{D}$ represents a trivial link, its Khovanov homology is concentrated in homological degree zero, therefore the spectral sequence collapses at the second page and the claim follows.

  Second claim follows directly from the definition of group cohomology, see~Definition~6.1.2 in~\cite{Weibel}. The third claim is a consequence of the fact that \(\kh(\mathcal{T}_{0,f};R)\) is a graded free \(R\)-module with a trivial action of \(\z{m}\).
\end{proof}
When $k > 0$ the action of \(\z{m}\) on $\kh(T_{k,f})$ is nontrivial, but the resulting representation can be decomposed into a direct sum of permutation modules, i.e. modules of the form $\GpAAlg{R}{\z{m}/H}$, for a subgroup $H \subset \z{m}$.
For example, if \(m=p\) is a prime, we have a decomposition $\kh(T_{k,f},R) = \kh(T_{0,f},R) \otimes \kh(T_{k,0},R)$. Moreover, $\kh(T_{k,0},R) = \left(\mathcal{A}^{\otimes p}\right)^{\otimes k}$, where $\mathcal{A} = \GpAAlg{R}{X} / (X^2)$ is the Khovanov algebra and the action on each $\mathcal{A}^{\otimes p}$ factor is given by the formula
\begin{linenomath*}
  $$t \cdot (x_1 \otimes x_2 \otimes \ldots \otimes x_p) = x_p \otimes x_1 \otimes \ldots \otimes x_{p-1}.$$
\end{linenomath*}
For $0 \leq i \leq kp$, of $\mathcal{A}^{\otimes pk}$ we have
\begin{linenomath*}
  \begin{equation}
    \label{eq:decomposition-trivial-link}
    \left(\mathcal{A}^{\otimes kp}\right)^{2i-kp} =
    \begin{cases}
      R^{k \choose i/p} \oplus \GpAAlg{R}{\z{p}}^{\left({pk \choose i} - {k \choose i/p}\right)/p}, & \text{ if } p \mid i, \\
      \GpAAlg{R}{\z{p}}^{{pk \choose i}/p}, & \text{otherwise}.
    \end{cases}
  \end{equation}
\end{linenomath*}

\begin{ex}\label{ex:equiv-kh-trivial-link-T-1-0}
  Consider the case $f = 0$, $k = 1$ and $p$ is an arbitrary prime. Equivariant Khovanov homology of $T_{1,0}$ is given by the following formula
  \begin{linenomath*}
    \begin{equation*}
      \kh_{\z{p}}^{i,2j-p}(T_{1,0},R) =
      \begin{cases}
        H^i(\z{p},R) & \text{if } j \in \{0, p\},  \\
        R^{{p \choose j}/p}, & \text{if } i = 0 \text{ and } 0 < j < p, \\
        0, & \text{otherwise}.
      \end{cases}
    \end{equation*}
  \end{linenomath*}
\end{ex}

\subsection{Integral equivariant Khovanov homology}
\label{sec:integr-equiv-kh}

Let \(L\) be an \(m\)-periodic link.
Let us discuss, as an example, equivariant Khovanov homology over \(R = \zzz\).

First, let us consider, for any \(d \mid m\), the cyclotomic ring \(\cyclotomicRing{d}\), where \(\xi_d = \exp\left(\frac{2 \pi i}{d}\right)\). In other words \(\cyclotomicRing{d} = \zzz[X]/\Phi_{d}(X)\), where \(\Phi_d(X)\) is the \(d\)-th cyclotomic polynomial, see Definition~\ref{defn:cyclotomic_polynomials}.
Since \(\Phi_d(X)\) divides \(X^m-1\), there exists a surjective ring homomorphism \(\Lambda_{m} \to \cyclotomicRing{d}\) which equips the cyclotomic ring with the structure of a \(\Lambda_m\)-module.
Let us define, for any \(d \mid m\),
\begin{linenomath*}
\[\kh_{\z{m}}^{\ast,\ast,d}(L;\zzz) = \kh_{\z{m}}^{\ast,\ast}(L;\cyclotomicRing{d}).\]
\end{linenomath*}

\begin{thm}\label{thm:total_equivariant_homology}
Let $p_{1}, \ldots, p_{s}$ be the collection of all prime divisors of $m$. Define the ring $R_{m} = 
\gpRing{\frac{1}{p_{1}}, \frac{1}{p_{2}}, \ldots, \frac{1}{p_{s}}}.$
There exists a natural map
\begin{linenomath*}
$$\bigoplus_{d | m} \kh_{\z{m}}^{\ast, \ast, d}(L;\zzz) \to \kh(L;\zzz),$$
\end{linenomath*}
which becomes an isomorphism when tensored with $R_{m}$.
\end{thm}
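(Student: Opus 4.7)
The strategy is to exploit the maximal order $\Lambda' = \bigoplus_{d\mid n}\cyclotomicRing{d}$ inside $\cyclicGroupAlg{n}$, together with the short exact sequence
\[
0 \to \cyclicGroupRing{n} \to \Lambda' \to M \to 0
\]
of Proposition~\ref{prop:long_exact_seq_maximal_order}, in which $nM=0$. First, I would identify the target of the natural map. Since $\cyclicGroupRing{n}$ is free over itself,
\[
\Ext{i}{\cyclicGroupRing{n}}{\cyclicGroupRing{n}}{\cc(D)} \;=\; H^{i}\!\left(\Hom{\ast}{\cyclicGroupRing{n}}{\cyclicGroupRing{n}}{\cc(D)}\right) \;=\; H^{i}(\cc(D)) \;=\; \kh^{i,\ast}(L),
\]
so the classical Khovanov homology is recovered as an Ext group over the group ring. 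On the source side, because Hom (hence Ext) turns direct sums in the first slot into direct products, and the sum is finite here, the direct sum decomposition of $\Lambda'$ yields
\[
\Ext{\ast}{\cyclicGroupRing{n}}{\Lambda'}{\cc(D)} \;=\; \bigoplus_{d\mid n}\Ext{\ast}{\cyclicGroupRing{n}}{\cyclotomicRing{d}}{\cc(D)} \;=\; \bigoplus_{d\mid n}\kh_{\z{n}}^{\ast,\ast,d}(L).
\]

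Next, I would apply $\Ext{\ast}{\cyclicGroupRing{n}}{-}{\cc(D)}$ to the short exact sequence above, invoking part~(2) of Proposition~\ref{prop: properties of ext} to obtain a long exact sequence
\[
\ldots \to \Ext{i}{\cyclicGroupRing{n}}{M}{\cc(D)} \to \bigoplus_{d\mid n}\kh_{\z{n}}^{i,\ast,d}(L) \to \kh^{i,\ast}(L) \to \Ext{i+1}{\cyclicGroupRing{n}}{M}{\cc(D)} \to \ldots
\]
The middle arrow here is exactly the natural map asserted in the theorem, induced by the inclusion $\cyclicGroupRing{n} \hookrightarrow \Lambda'$, which is natural in the diagram $D$ because the whole construction is functorial in the complex $\cc(D)$.

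To finish, I would show that the outer terms vanish after applying $-\otimes R_{n}$. Because $n\cdot M=0$, functoriality of Ext in the first argument makes multiplication by $n$ act as zero on $\Ext{\ast}{\cyclicGroupRing{n}}{M}{\cc(D)}$, so these groups are $n$-torsion, and hence $p_{i}$-torsion for some prime $p_{i}\mid n$. Localization at $R_{n}=\zzz[1/p_{1},\ldots,1/p_{s}]$ kills all such torsion. Since $R_{n}$ is flat over $\zzz$, tensoring the long exact sequence with $R_{n}$ preserves exactness, and the outer Ext groups vanish, forcing the natural map to become an isomorphism after tensoring with $R_{n}$.

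The main thing to be careful about is the vanishing step: one must confirm that the scalar-multiplication action of $n \in \zzz$ on $\Ext{i}{\cyclicGroupRing{n}}{M}{\cc(D)}$ really is induced by multiplication by $n$ on $M$ (which is zero). This is standard once one represents the Ext class by Yoneda extensions or by a cocycle in $\Hom{\ast}{\cyclicGroupRing{n}}{M}{I^{\ast}}$ for an injective resolution $I^{\ast}$ of $\cc(D)$, but it is the only genuinely non-formal ingredient: everything else is a direct consequence of Propositions~\ref{prop:long_exact_seq_maximal_order} and~\ref{prop: properties of ext} combined with the freeness of $\cyclicGroupRing{n}$ as a module over itself.
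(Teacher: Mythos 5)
Your proof is correct. The core ingredients match the paper's: both identify the target as $\Ext{\ast}{\cyclicGroupRing{n}}{\cyclicGroupRing{n}}{\cc(D)}\cong\kh^{\ast,\ast}(L)$, both decompose the source using $\Lambda'=\bigoplus_{d\mid n}\cyclotomicRing{d}$, and both rest on the short exact sequence $0\to\cyclicGroupRing{n}\to\Lambda'\to M\to 0$ with $nM=0$. But the finishing move is genuinely different. The paper tensors the Ext groups with $R_n$, uses $\Lambda'\otimes_\zzz R_n = \cyclicGroupRing{n}\otimes_\zzz R_n = \GpAAlg{R_n}{\z{n}}$, and invokes a flat base-change isomorphism for Ext to transport everything over the ring $\GpAAlg{R_n}{\z{n}}$, where the inclusion $\cyclicGroupRing{n}\hookrightarrow\Lambda'$ becomes an isomorphism after tensoring. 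You instead apply $\Ext{\ast}{\cyclicGroupRing{n}}{-}{\cc(D)}$ to the short exact sequence, getting a long exact sequence whose outer terms $\Ext{\ast}{\cyclicGroupRing{n}}{M}{\cc(D)}$ are $n$-torsion (since multiplication by $n$ factors through $n\cdot\mathrm{id}_M = 0$), and then tensor the whole long exact sequence with the flat ring $R_n$, which kills those outer terms. Your route avoids the flat base-change step for Ext, which the paper uses implicitly without citing; it also makes the ``natural map'' of the statement visibly arise as a map in a long exact sequence. Additionally, your identification of $\Ext{\ast}{\cyclicGroupRing{n}}{\cyclicGroupRing{n}}{\cc(D)}$ with $\kh(L)$ via projectivity of the free module is more direct than the paper's appeal to the collapse of the Cartan--Eilenberg spectral sequence, though both are valid. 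In short: same skeleton, different (and arguably cleaner) muscle.
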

\begin{proof}
  Let us choose an \(m\)-periodic diagram \(\mathcal{D}\) of \(L\). We have
  \begin{linenomath*}
    \begin{equation}\label{eq:free-ext}
      \Ext{\ast,\ast}{\Lambda_m}{\Lambda_m}{\cc(\mathcal{D})} \cong \kh^{\ast,\ast}(\mathcal{D}).
    \end{equation}
  \end{linenomath*}
  Indeed, consider the Cartan-Eilenberg spectral sequence for \(\Ext{\ast,\ast}{\Lambda_m}{\Lambda_m}{\cc(\mathcal{D})}\).
  The \(E_2\) page of this spectral sequence is given by
  \begin{linenomath*}
    \begin{align*}
      &E_{2}^{p,q} = \ext{p,\ast}{\Lambda_m}{\Lambda_m}{\kh^{q,\ast}(\mathcal{D})} = \\
      &= \left \{
        \begin{array}{ll}
          \Hom{}{\Lambda_m}{\Lambda_m}{\kh^{q,\ast}(\mathcal{D})}, & p = 0, \\
          0, & p > 0.\\
        \end{array}
      \right.
    \end{align*}
  \end{linenomath*}
  Therefore, the spectral sequence collapses at the \(E_2\) page and since
  \begin{linenomath*}
    $$\Hom{}{\Lambda_m}{\Lambda_m}{\kh^{q,\ast}(\mathcal{D})) \cong \kh^{q, \ast}(\mathcal{D})},$$
  \end{linenomath*}
  equality~\eqref{eq:free-ext} holds.

  Let \(\Lambda' = \bigoplus_{d \mid m} \cyclotomicRing{d}\).
  Proposition 27.1 from \cite{CurtisReiner} implies that \(\cyclicGroupRing{m} \subset \Lambda'\) and this inclusion induces an isomorphism
  \begin{linenomath*}
    $$\Lambda' \otimes_{\zzz} R_{n} = \Lambda_m \otimes_{\zzz} R_{n} = \GpAAlg{R_{n}}{\z{n}},$$
  \end{linenomath*}
  Consequently 
  \begin{linenomath*}
    \begin{align*}
      &\Ext{r,\ast}{\Lambda_m}{\Lambda'}{\cc(\mathcal{D})} \otimes_{\zzz} R_{n} \cong \\
      &\cong \Ext{r,\ast}{\GpAAlg{R_{n}}{\z{n}}}{\Lambda' \otimes_{\zzz} R_{n}}{\cc(\mathcal{D}) \otimes_{\zzz} 
        R_{n}} \cong \\
      &\cong \Ext{r,\ast}{\GpAAlg{R_{n}}{\z{n}}}{\GpAAlg{R_{n}}{\z{n}}}{\cc(\mathcal{D}) \otimes_{\zzz} R_{n}} 
        \cong \kh^{r, \ast}(\mathcal{D}) \otimes_{\zzz} R_{n},
    \end{align*}
\end{linenomath*}
  because $R_{n}$ is flat over $\zzz$. The last step of the proof consist of noticing that
  \begin{linenomath*}
    $$\Ext{\ast,\ast}{\Lambda_m}{\Lambda'}{\cc(\mathcal{D})} = \bigoplus_{d | r} \kh_{\z{m}}^{\ast, \ast, d}(L).$$
  \end{linenomath*}
\end{proof}

As indicated in Proposition~\ref{prop:multiplicative-structure}, \(\kh_{\z{m}}^{\ast,\ast,d}(L)\) admits an action of
\[\Gamma(\cyclotomicRing{d}) = \Ext{\ast}{\cyclicGroupRing{m}}{\cyclotomicRing{d}}{\cyclotomicRing{d}}.\]
Proposition \ref{prop: properties of ext rings of cylotomic rings} implies that there exists an isomorphism
\begin{linenomath*}
$$\Ext{\ast}{\cyclicGroupRing{m}}{\cyclotomicRing{d}}{\cyclotomicRing{d}} \cong \cyclotomicRing{d}[T_{d}]/(\Psi_{m,d}(\xi_{d})T_{d}),$$
\end{linenomath*}
where $\deg T_{d} = 2$ and \(\Psi_{m,d}(X)\) is a certain integral polynomial, see Definition~\ref{defn:cyclotomic_polynomials}. The action of \(\Gamma(\cyclotomicRing{d})\) imposes some restrictions on algebraic structure of equivariant Khovanov homology.
\begin{prop}\label{prop: periodicity}
Let $\ell \in \zzz$ be chosen such that $\kh^{\ell,\ast}(L) \neq 0$ and $\kh^{i,\ast}(L)$ vanishes for $i > \ell$. Multiplication by $T_{d}$
\begin{linenomath*}
$$- \cup T_{d} \colon \equivKh{m}{i}{\ast}{d}(L) \to \equivKh{m}{i+2}{\ast}{d}(L)$$
\end{linenomath*}
is an epimorphism for $i=\ell$ and isomorphism for $i> \ell$.
\end{prop}

\begin{rem}
This is an analogue of \cite[Lemma 1.1]{Wall}.
\end{rem}

\begin{proof}
Note that Proposition~\ref{prop: properties of ext rings of cylotomic rings} implies that the class \(T_d \in \Gamma^2(\cyclotomicRing{d})\) is represented by the following Yoneda extension
\begin{linenomath*}
  $$0 \to \cyclotomicRing{d} \to \cyclicGroupRing{m} \xrightarrow{\Phi_{d}(t)} \cyclicGroupRing{m} \to \cyclotomicRing{d} \to 0,$$
\end{linenomath*}
where \(t\) denotes a fixed generator of \(\z{m}\).
The above exact sequence can be split into two short exact sequences.
\begin{linenomath*}
  \begin{align}
    0 &\to \cyclotomicRing{d} \to \cyclicGroupRing{m} \to M \to 0 \label{eq:short exact seq 1}\\
    0 &\to M \to \cyclicGroupRing{m} \to \cyclotomicRing{d} \to 0. \label{eq:short exact seq 2}
  \end{align}
\end{linenomath*}
  Let 
\begin{linenomath*}
\begin{align*}
&\delta_{1} \colon \kh_{\z{m}}^{i,\ast,d} \to \Ext{i+1,\ast}{\cyclicGroupRing{m}}{M}{\cc(L)}\\
&\delta_{2} \colon \Ext{i,\ast}{\cyclicGroupRing{m}}{M}{\cc(L)} \to \kh_{\z{m}}^{i+1,\ast,d}(L),
\end{align*}
\end{linenomath*}
be the connecting associated to exact sequences~\eqref{eq:short exact seq 1} and~\eqref{eq:short exact seq 2}, respectively. The map
\begin{linenomath*}
$$-\cup T_{d} \colon \kh_{\z{m}}^{i,\ast,d}(L) \to \kh_{\z{m}}^{i+2,\ast,d}(L)$$
\end{linenomath*}
is equal to the composition $\delta_{2} \circ \delta_{1}$.
Since
\begin{linenomath*}
$$\Ext{i,\ast}{\cyclicGroupRing{m}}{\cyclicGroupRing{m}}{\cc(L)} \cong \kh^{i,\ast}(L),$$
\end{linenomath*}
see the proof of Theorem \ref{thm:total_equivariant_homology}, and $\kh^{i,\ast}(L)=0$ for $i > \ell$ we deduce that $\delta_{1}$ and $\delta_{2}$ 
are epimorphisms for $i = \ell$ and isomorphisms for $i>\ell$.
\end{proof}

\begin{cor}\label{cor: torsion}
Let $\ell$ be as in the previous corollary. For $i > \ell$, $\equivKh{m}{i}{\ast}{d}(L)$ is annihilated by $\frac{m \cdot rad(d)}{d}$, where $rad(d)$ denotes the product of all primes dividing $d$.
\end{cor}
\begin{proof}
This is a direct consequence of the previous corollary and Proposition~\ref{prop: properties of ext rings of cylotomic rings}.
\end{proof}

In Section~\ref{sec:defin-basic-prop-ekh}, we denoted by $\mathcal{T}_{k,f}$ an $m$-periodic unlink diagram consisting of $f$ fixed components and $k$ free orbits.
\begin{ex}\label{ex:computations-trivial-link-1}
  Consider first $\mathcal{T}_{0,f}$. Since all components are fixed, the action of $\z{m}$ on $\kh(\mathcal{T}_{0,f})$ is trivial. Therefore,
  \begin{linenomath*}
    $$\kh_{\z{m}}^{\ast,\ast,d}(\mathcal{T}_{0,f}) = \Ext{\ast}{\gpRing{\z{m}}}{\gpRing{\xi_d}}{\zzz} \otimes \kh(\mathcal{T}_{0,f}).$$
  \end{linenomath*}
  For $d = 1$ we obtain from Proposition~\ref{prop:kh-trivial-links}
  \begin{linenomath*}
    $$\kh_{\z{m}}^{\ast,\ast}(\mathcal{T}_{0,f}) = H^{\ast}(\z{p},\zzz) \otimes_{\zzz} \kh(\mathcal{T}_{0,f}).$$
  \end{linenomath*}
  For the case \(d>1\) Proposition~\ref{prop:ext-rings-d-1} implies that
  \[\kh_{\z{m}}^{i,j,d}(\mathcal{T}_{0,f}) =
    \begin{cases}
      \kh^{0,j}(\mathcal{T}_{0,f};\zzz) \otimes \fff_p, & 2 \nmid i \text{ and } d = p^{\alpha}, \\
      0,                                                & \text {otherwise}.
    \end{cases}\]
\end{ex}

\begin{ex}
  Consider the case $f = 0$, $k = 1$ and $p$ is an arbitrary prime. Equivariant Khovanov homology of $T_{1,0}$ is given by the following formula
\begin{linenomath*}
  \begin{align*}
  \kh_{\z{p}}^{i,2j-p,1}(T_{1,0}) &=
  \begin{cases}
    \fff_{p} & \text{if } j \in \{0, p\} \text{ and } i = 2k, k > 0 \\
    \zzz^{{p \choose j}/p}, & \text{if } i = 0 \text{ and } 0 \leq j \leq p, \\
    0, & \text{otherwise}.
  \end{cases} \\
  \kh_{\z{p}}^{i, 2j-p, p} (T_{1,0}) &=
  \begin{cases}
    \fff_{p} & \text{if } j = 0, p \text{ and } i = 2k+1, k >0,\\
    \gpRing{\xi_p}^{{p \choose j}/p}, & \text{if } 0 < j < p \text{ and } i = 0, \\
    0, & \text{otherwise}
  \end{cases}
  \end{align*}
\end{linenomath*}
which can be obtained from Example~\ref{ex:equiv-kh-trivial-link-T-1-0} and Example~\ref{ex:computations-trivial-link-1}.
\end{ex}

\subsection{Rational equivariant Khovanov homology}
\label{subsec: equiv kh q}

Let us briefly describe properties of equivariant Khovanov homology over the field of rationals. Recall that the rational group algebra \(\cyclicGroupAlg{m}\) is semi-simple and decomposes into a direct sum of simple modules
\begin{linenomath*}
\[\cyclicGroupAlg{m} = \bigoplus_{d \mid m} \cyclotomicField{d}\]
\end{linenomath*}
where, as in the previous section, \(\cyclotomicField{d} = \qqq[X]/\Phi_d(X)\), where \(\Phi_d(X)\) is the \(d\)-th cyclotomic polynomial, see Definition~\ref{defn:cyclotomic_polynomials}.
For \(d \mid m\) let us denote, for \(d \mid m\),
\begin{linenomath*}
\[\kh_{\z{m}}^{\ast,\ast,d}(L;\qqq) = \kh_{\z{m}}^{\ast,\ast}(L;\cyclotomicField{d}).\]
\end{linenomath*}

\begin{prop}\label{prop: structure rational equiv kh}
If $\mathcal{D}$ is an $m$-periodic link diagram, then
\begin{linenomath*}
  $$\cc^{\ast,\ast}(\mathcal{D}; \qqq) \cong \bigoplus_{d \mid n} \cc_{\z{n}}^{\ast,\ast,d}(\mathcal{D}; \qqq),$$
\end{linenomath*}
where
\begin{linenomath*}
  $$\cc_{\z{n}}^{\ast,\ast,d}(\mathcal{D}; \qqq) = 
  \Hom{}{\cyclicGroupAlg{n}}{\cyclotomicField{d}}{\cc^{\ast,\ast}(\mathcal{D}; \qqq)}.$$
\end{linenomath*}
Moreover,
\begin{linenomath*}
  $$\kh_{\z{n}}^{\ast,\ast,d}(\mathcal{D}; \qqq) = H^{\ast,\ast}(\cc_{\z{n}}^{\ast,\ast,d}(\mathcal{D})).$$
\end{linenomath*}
\end{prop}
\begin{proof}
The proposition is a consequence of the Wedderburn decomposition and Schur's Lemma. For more 
details refer to Chapter 1 of~\cite{CurtisReiner}.
\end{proof}

Recall from Section~\ref{sec: preliminaries} that the \emph{Euler’s totient function} is defined
\[\varphi(n) = \# \{1 \leq j \leq n \colon \gcd(j,n)=1\}.\]
Moreover, for any positive integer \(n\), \(\dim_{\qqq}\cyclotomicField{n} = \varphi(n)\).

\begin{cor}\label{cor: vanishing of equiv kh}
Suppose that $\mathcal{D}$ is an $m$-periodic diagram of a link and let $d \mid m$. If for any $i,j$ we have 
$\dim_{\qqq} \kh^{i,j}(\mathcal{D};\qqq) < \varphi(d)$, then $\kh_{\z{n}}^{\ast,\ast,d}(\mathcal{D}; \qqq) = 0$.
\end{cor}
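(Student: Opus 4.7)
The plan is to derive the vanishing directly from Proposition~\ref{prop: structure rational equiv kh}, by showing that each bidegree $\kh^{i,j}(D;\qqq)\cdot e_d$ is forced to be zero on dimension-counting grounds.

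First I would recall that by Proposition~\ref{prop: structure rational equiv kh} the rational equivariant Khovanov homology in third grading $d$ is obtained as
$$\kh_{\z{n}}^{\ast,\ast,d}(D;\qqq) = \kh^{\ast,\ast}(D;\qqq)\cdot e_{d},$$
where $e_{d}$ is the central idempotent in $\cyclicGroupAlg{n}$ corresponding to the Wedderburn summand $\cyclotomicField{d}$. Consequently, for each bidegree $(i,j)$, the summand $\kh^{i,j}(D;\qqq)\cdot e_{d}$ is annihilated by $1-e_{d}$ and $e_{d}$ acts on it as the identity, so it is naturally a module over $\cyclotomicField{d}$.

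Next, since $\cyclotomicField{d}$ is a field, $\kh^{i,j}(D;\qqq)\cdot e_{d}$ is a $\cyclotomicField{d}$-vector space, and as a $\qqq$-vector space its dimension is therefore an integer multiple of $[\cyclotomicField{d}:\qqq]=\varphi(d)$. Combining this with the hypothesis
$$\dim_{\qqq}\kh^{i,j}(D;\qqq)\cdot e_{d} \;\leq\; \dim_{\qqq}\kh^{i,j}(D;\qqq) \;<\;\varphi(d),$$
the only nonnegative multiple of $\varphi(d)$ strictly smaller than $\varphi(d)$ is zero, so $\kh^{i,j}(D;\qqq)\cdot e_{d}=0$ in every bidegree.

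Since this holds for all $i,j$, we conclude $\kh_{\z{n}}^{\ast,\ast,d}(D;\qqq)=0$. There is no real obstacle here: the whole argument is dimension counting, and the only thing to verify carefully is that the $\cyclotomicField{d}$-module structure on $\kh^{i,j}(D;\qqq)\cdot e_{d}$ is the one induced by the Wedderburn summand, which is immediate from Proposition~\ref{prop: structure rational equiv kh}.
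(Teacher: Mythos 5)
Your proof is correct and follows the paper's own argument: both rest on Proposition~\ref{prop: structure rational equiv kh} to identify $\kh_{\z{n}}^{i,j,d}(D;\qqq)$ as a $\cyclotomicField{d}$-vector space, so that its $\qqq$-dimension is a multiple of $\varphi(d)$, and the hypothesis forces it to vanish bidegree by bidegree. Your version is a bit more explicit about passing through the idempotent $e_d$ and bounding $\dim_\qqq(\kh^{i,j}(D;\qqq)\cdot e_d)$ by $\dim_\qqq\kh^{i,j}(D;\qqq)$, but the mechanism is the same.
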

\begin{proof}
Indeed, since $\kh_{\z{n}}^{\ast,\ast,d}(\mathcal{D}; \qqq)$ is a $\cyclotomicField{d}$ vector space, it 
follows readily that $\dim_{\qqq} \kh_{\z{n}}^{\ast,\ast,d}(\mathcal{D}; \qqq)$ is divisible by $\dim_{\qqq} \cyclotomicField{d} = \varphi(d)$.
\end{proof}

\noindent The above corollary can be used to compute the equivariant Khovanov homology in some 
cases.

\begin{cor}
Let $T(n,2)$ be a torus link. Let $d > 2$ be a divisor of $n$. According to Example
\ref{ex: periodic torus links}, $T(n,2)$ is $d$-periodic. Let $d' > 2$ and $d' \mid d$.
$$\kh^{\ast,\ast,d'}_{\z{d}}(T(n,2); \qqq) = 0.$$
\end{cor}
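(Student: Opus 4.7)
The plan is to reduce the corollary to a direct application of Corollary \ref{cor: vanishing of equiv kh}. Since $d' > 2$, we have $\varphi(d') \geq 2$, so the hypothesis of that corollary will be satisfied as soon as we know that every bigraded piece of $\kh^{\ast,\ast}(T(n,2); \qqq)$ has dimension at most one.

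First I would recall the classical computation of the rational Khovanov homology of the $(n,2)$-torus link, due to Khovanov. Each nonzero rational homology group $\kh^{i,j}(T(n,2); \qqq)$ is isomorphic to $\qqq$; the full bigraded picture (two thin diagonals with an additional copy of $\qqq$ at the top) can be read off from the computation of the Jones polynomial of $T(n,2)$, whose coefficients all have absolute value one, combined with the fact that the rational Khovanov homology of $T(n,2)$ is thin (i.e.\ supported on two adjacent diagonals $j - 2i = c, c+2$). If one prefers not to cite this, it can be re-derived inductively on $n$ from the long exact sequence obtained by resolving a single crossing of $T(n,2)$, reducing to the pair $T(n-1,2)$ together with a two-component unlink and checking that the connecting maps split the sequence into short exact sequences of at most one-dimensional pieces.

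Once the bound $\dim_{\qqq} \kh^{i,j}(T(n,2); \qqq) \leq 1 < \varphi(d')$ is established for every bidegree, Corollary \ref{cor: vanishing of equiv kh} applied to any $n$-periodic diagram $D$ of $T(n,2)$ with $d \mid n$ (such a diagram exists by Example \ref{ex: periodic torus links}) yields the vanishing $\kh^{\ast,\ast,d'}_{\z{d}}(T(n,2); \qqq) = 0$ for every divisor $d' > 2$ of $d$.

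The only delicate point is really bookkeeping rather than genuine obstacle: one must make sure that the periodic diagram used to compute the equivariant Khovanov homology is one to which the invariance Theorem \ref{thm:invariance_equivariant_homology} applies, so that the vanishing on any chosen periodic diagram of $T(n,2)$ implies the vanishing of the invariant. Apart from that, the argument is a one-line application of the dimension bound combined with the fact that $\kh_{\z{d}}^{\ast,\ast,d'}(T(n,2); \qqq)$ is a $\cyclotomicField{d'}$-vector space.
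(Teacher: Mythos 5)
Your argument is exactly the paper's: both invoke Corollary \ref{cor: vanishing of equiv kh} after citing the fact (Khovanov's computation) that $\dim_{\qqq}\kh^{i,j}(T(n,2);\qqq)\leq 1$ for all $i,j$, and note $\varphi(d')>1$ when $d'>2$. The extra remarks about re-deriving the dimension bound and about diagram bookkeeping are fine but not needed; the core step matches the paper's proof.
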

\begin{proof}
Indeed, because according to \cite{Khovanov1}*{Prop. 35} for all $i,j$ we have
$$\dim_{\qqq} \kh^{i,j}(T(n,2); \qqq) \leq 1$$
and $\varphi(d') > 1$ if $d' > 2$.
\end{proof}

\begin{cor}
  Let \(T(n,3)\) denote a torus link. 
  If $\gcd(3,n) = 1$,  
  \[\kh^{\ast,\ast,3}_{\z{3}}(T(n,3);\qqq) = 0.\]
  If $d > 2$ divides $n$, $d' > 2$ and $d' \mid d$, then 
  \[\kh^{\ast,\ast,d'}_{\z{d}}(T(n,3); \qqq) = 0.\]
\end{cor}
\begin{proof}
  Indeed, Theorem 3.1 from~\cite{Turner2} implies that for all $i,j$
  $$\dim_{\qqq}   \kh^{i,j}(T(n,3); \qqq) \le 1,$$
  provided that $\gcd(3,n)=1$.
\end{proof}

\section{The skein spectral sequence}
\label{subsec: spectral sequence}

In order to compute Khovanov homology one usually uses the skein exact sequence.
Unfortunately, if the link in question is periodic, resolution of a single crossing kills the symmetry.
To remedy this it is necessary to choose an orbit of crossings and take into account all possible resolutions of crossings belonging to the orbit.
These data can be organized into a spectral sequence. The construction of thereof is the main goal of this section.
In the non-equivariant case similar spectral sequence was constructed in~\cite{Turner2,EverittTurner}.

\begin{defn}
Let $\alpha \colon \cross(\mathcal{D}) \to \{0,1,x\}$ be a map.
\begin{enumerate}
\item If $i \in \{0,1,x\}$ define $|\alpha|_{i} = \#\alpha^{-1}(i)$. 
\item Define the support of $\alpha$ to be $\supp \alpha = \alpha^{-1}(\{0,1\})$.
\item Define also the following family of maps
  \begin{linenomath*}
  \begin{align*}
\mathcal{B}_k(X) &= \{\alpha \colon \cross(\mathcal{D}) \to \{0,1,x\} \mid \supp \alpha = X, |\alpha|_{1} = k \}.
\end{align*}
  \end{linenomath*}
\item  Denote by $\mathcal{D}_{\alpha}$ the diagram obtained from $\mathcal{D}$ by resolving crossings from $\alpha^{-1}(0)$ by $0$-smoothing and from $\alpha^{-1}(1)$ by $1$-smoothing. 
\end{enumerate}
\end{defn}

By $\ccc(\mathcal{D})$ we will denote the unnormalized Khovanov complex 
\begin{linenomath*}
  $$\ccc(\mathcal{D}) = \cc(\mathcal{D})[n_{-}(\mathcal{D})]\{2n_{-}(\mathcal{D})-n_{+}(\mathcal{D})\},$$
\end{linenomath*}
where $n_{+}(\mathcal{D})$ and $n_{-}(\mathcal{D})$ denote the number of positive and negative crossings of $\mathcal{D}$, respectively.

Fix a crossing $c \in \cross(\mathcal{D})$ and consider three maps
\begin{linenomath*}
$$\alpha_{0},\alpha_{1},\alpha_{x} \colon \cross(\mathcal{D}) \to \{0,1,x\},$$
\end{linenomath*}
which attain different value at $c$, i.e.  $\alpha_{1}(c) = 1$, $\alpha_{x}(c) = x$ and $\alpha_{0}(c) = 0$, and are identical otherwise. These data yield the  following short exact sequence of complexes
\begin{linenomath*}
\begin{equation} \label{eqn:short_exact_seq_res_crossing}
0 \to \ccc(\mathcal{D}_{\alpha_{1}})[1]\{1\} \to \ccc(\mathcal{D}_{\alpha_{x}}) \to \ccc(\mathcal{D}_{\alpha_{0}}) \to 0,
\end{equation}
see~\cite{turner_five_lect}.
\end{linenomath*}
There exists a chain map
\begin{linenomath*}
$$\delta_{c} \colon \ccc(\mathcal{D}_{\alpha_{0}}) \to \ccc(\mathcal{D}_{\alpha_{1}})\{1\},$$
\end{linenomath*}
such that $\ccc(\mathcal{D}_{\alpha_{x}}) = \cone(\delta_{c})$, where $\cone(\delta_{c})$ denotes the algebraic mapping cone of $\delta_{c}$, and (\ref{eqn:short_exact_seq_res_crossing}) is the corresponding short exact sequence of complexes. The map $\delta_{c}$ is obtained as follows. We identify $\ccc(\mathcal{D}_{\alpha_{0}})$ and $\ccc(\mathcal{D}_{\alpha_{1}})$ with submodules of 
$\ccc(\mathcal{D}_{\alpha_{x}})$ ``generated'' by Kauffman states with $c$ resolved with $0$-smoothing or $1$-smoothing, respectively. As a graded module $\ccc(\mathcal{D}_{\alpha_{x}})$ splits as the direct sum
\begin{linenomath*}
$$\ccc(\mathcal{D}_{\alpha_{x}}) = \ccc(\mathcal{D}_{\alpha_{0}}) \oplus \ccc(\mathcal{D}_{\alpha_{1}})\{1\}[1].$$
\end{linenomath*}
with $\ccc(\mathcal{D}_{\alpha_{1}})$ a subcomplex. If $\pi_{1}$ denotes the projection of $\ccc(\mathcal{D}_{\alpha_{x}})$ onto $\ccc(\mathcal{D}_{\alpha_{1}})$ and $i_{0}$ denotes the inclusion of $\ccc(\mathcal{D}_{\alpha_{0}})$, then
\begin{linenomath*}
$$\delta_{c} = \pi_{1} \circ d \circ i_{0}.$$
\end{linenomath*}

When we consider two crossings $c$ and $c'$, we obtain the following bicomplex
\begin{center}
\begin{tikzpicture}
\matrix(m)[matrix of math nodes, column sep=1cm]
{
                      & \ccc(\mathcal{D}_{\alpha_{10}})\{1\} & \\
\ccc(\mathcal{D}_{\alpha_{00}}) & \oplus                     & \ccc(\mathcal{D}_{\alpha_{11}})\{2\} \\
                      & \ccc(\mathcal{D}_{\alpha_{01}})\{1\} & \\
};
\path[->]
(m-2-1) edge node {} (m-1-2)
(m-1-2) edge node {} (m-2-3)
(m-2-1) edge node {} (m-3-2)
(m-3-2) edge node {} (m-2-3);
\end{tikzpicture}
\end{center}
where $\alpha_{00}$, $\alpha_{10}$, $\alpha_{01}$, $\alpha_{11}$ differ only at $c$ or $c'$ and
\begin{linenomath*}
\begin{align*}
\alpha_{00}(c) &= \alpha(c') = 0, \\
\alpha_{10}(c) &= 1, \quad \alpha_{10}(c') = 0, \\
\alpha_{01}(c) &= 0, \quad \alpha_{01}(c') = 1, \\
\alpha_{11}(c) &= \alpha_{11}(c') = 1.
\end{align*}
\end{linenomath*}
The horizontal maps are defined analogously as in the previous case. The total complex of the above bicomplex is equal to the unnormalized Khovanov complex of $\mathcal{D}_{\alpha_{xx}}$, where $\alpha_{xx}$ agrees with $\alpha_{00}$, $\alpha_{10}$, $\alpha_{01}$ and $\alpha_{11}$ outside $c$ and $c'$ and
\begin{linenomath*}
$$\alpha_{xx}(c) = \alpha_{xx}(c') = x.$$
\end{linenomath*}
Continuing this procedure we obtain the following bicomplex
\begin{linenomath*}
$$N^{i,j,k} = \left\{
\begin{array}{ll}
\bigoplus_{\alpha \in \mathcal{B}_{i}(X)} \limits \ccc^{j,k}(\mathcal{D}_{\alpha})\{i\}, & 0 \leq i \leq \# X, \\
0, & \mbox{otherwise}. \\
\end{array}
\right.$$
\end{linenomath*}
The total complex of $N$, denoted by $\Tot(N)^{\ast,\ast}$ and defined by
\begin{linenomath*}
$$\Tot(N)^{i,j} = \bigoplus_{k+l = i} N^{k,l,j},$$
\end{linenomath*}
is equal to $\ccc(\mathcal{D})$.

\begin{defn}
Let $M^{\ast,\ast,\ast}$ be a bicomplex of graded $\zzz$-modules such that
\begin{linenomath*}
$$M^{i,j,k} = \left\{
\begin{array}{ll}
\bigoplus_{\alpha \in \mathcal{B}_{i}(X)} \limits \cc^{j,k}(\mathcal{D}_{\alpha})[c(\mathcal{D}_{\alpha})]\{i + 3 c(\mathcal{D}_{\alpha}) + \# X\}, & 0 \leq i \leq \# X, \\
0, & \mbox{otherwise}, \\
\end{array}
\right.$$
\end{linenomath*}
where $c(\mathcal{D}_{\alpha}) = n_{-}(\mathcal{D}_{\alpha}) - n_{-}(\mathcal{D})$. Vertical differentials
\begin{linenomath*}
$$M^{i,j, \ast} \to M^{i,j+1,\ast}$$
\end{linenomath*}
are sums of $\pm 1$ multiplicities of differentials in the respective Khovanov complexes. Horizontal differentials
\begin{linenomath*}
$$M^{i,j, \ast} \to M^{i+1,j,\ast},$$
\end{linenomath*}
on the other hand, are induced from the appropriate horizontal maps in the  bicomplex $N^{\ast,\ast,\ast}$.
\end{defn}

\begin{prop}
The total complex of $M^{\ast,\ast,\ast}$ is equal to the Khovanov complex $\cc(\mathcal{D})$.
\end{prop}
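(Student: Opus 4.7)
The plan is to reduce the statement to the already-established identification $\Tot(N^{\ast,\ast,\ast}) = \ccc(D)$, which was essentially verified in the discussion preceding the definition of $M$. The idea is that $M$ ought to differ from $N$ only by a global shift in cohomological and quantum degree, namely exactly the shift that converts $\ccc(D) = \cc(D)[n_{-}(D)]\{2n_{-}(D) - n_{+}(D)\}$ back into $\cc(D)$.

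First I would observe that the coefficient $c(D_\alpha) = n_{-}(D_\alpha) - n_{-}(D)$ appearing in the definition of $M$ is independent of $\alpha \in \mathcal{B}_{i}(X)$. Indeed, resolving a crossing in $X$ simply deletes it, so $n_{-}(D_\alpha) = n_{-}(D) - n_{-}(X)$ and $n_{+}(D_\alpha) = n_{+}(D) - n_{+}(X)$, where $n_{\pm}(X)$ counts the $\pm$-crossings of $D$ lying in $X$, with $n_{+}(X) + n_{-}(X) = \#X$. Hence $c(D_\alpha) = -n_{-}(X)$ depends only on $X$. Substituting these identities into $\cc(D_\alpha) = \ccc(D_\alpha)[-n_{-}(D_\alpha)]\{n_{+}(D_\alpha) - 2n_{-}(D_\alpha)\}$ and collecting shifts, a direct computation gives
\[
\cc(D_\alpha)[c(D_\alpha)]\{i + 3c(D_\alpha) + \#X\} = \ccc(D_\alpha)\{i\}[-n_{-}(D)]\{n_{+}(D) - 2n_{-}(D)\},
\]
where the outer shifts $[-n_{-}(D)]\{n_{+}(D) - 2n_{-}(D)\}$ are the same for every $\alpha$. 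Summing over $\alpha \in \mathcal{B}_{i}(X)$ and over $i$, one obtains
\[
M^{\ast,\ast,\ast} = N^{\ast,\ast,\ast}[-n_{-}(D)]\{n_{+}(D) - 2n_{-}(D)\}
\]
as bicomplexes, since the horizontal and vertical differentials of $M$ are by definition those of $N$ transported through this global shift.

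Taking total complexes commutes with a uniform shift in bidegree, so
\[
\Tot(M) = \Tot(N)[-n_{-}(D)]\{n_{+}(D) - 2n_{-}(D)\} = \ccc(D)[-n_{-}(D)]\{n_{+}(D) - 2n_{-}(D)\} = \cc(D),
\]
with the middle equality being the previous identification $\Tot(N) = \ccc(D)$ and the last one the inverse of the defining shift for $\ccc(D)$. The main obstacle I anticipate is purely bookkeeping: one must verify that the signs in the per-summand horizontal differentials of $M$ really assemble into those of the Khovanov differential of $\cc(D)$. But this sign issue is already subsumed in the identification $\Tot(N) = \ccc(D)$, because the shift from $N$ to $M$ is global and does not affect differentials or signs; so beyond the computation of the uniform shift above, nothing further is required.
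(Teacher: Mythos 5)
Your overall strategy is exactly the paper's: use the already-established identification $\Tot(N) = \ccc(D)$, verify that $M$ is obtained from $N$ by the uniform shift $[-n_{-}(D)]\{n_{+}(D) - 2n_{-}(D)\}$, and conclude. The final shift identity you write down,
$$\cc(D_\alpha)[c(D_\alpha)]\{i + 3c(D_\alpha) + \#X\} = \ccc(D_\alpha)\{i\}[-n_{-}(D)]\{n_{+}(D) - 2n_{-}(D)\},$$
is correct, and the observation that signs are entirely handled by the identification $\Tot(N) = \ccc(D)$ is the right remark.

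However, the intermediate claim you use to get there is false. You assert that $n_{-}(D_\alpha) = n_{-}(D) - n_{-}(X)$ and $n_{+}(D_\alpha) = n_{+}(D) - n_{+}(X)$, hence that $c(D_\alpha) = -n_{-}(X)$ is independent of $\alpha$. This does not hold in general: a disoriented smoothing changes the connectivity of the diagram and forces a re-orientation of $D_\alpha$, which can flip the sign of crossings outside $X$. This is precisely why the term $c(D_\alpha)$ appears at all in the statement of the spectral sequence; if it were a constant depending only on $X$ the notation would be gratuitous. Luckily your calculation does not actually require this claim. The only facts needed are the definition $c(D_\alpha) = n_{-}(D_\alpha) - n_{-}(D)$ and the $\alpha$-independent crossing count $n_{+}(D_\alpha) + n_{-}(D_\alpha) = \#\cross(D) - \#X$; together these give the paper's identity $n_{+}(D) - n_{+}(D_\alpha) = c(D_\alpha) + \#X$, which is what makes the quantum shifts collapse. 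Replace the false constancy claim by this counting identity, and the argument is complete and coincides with the paper's.
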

\begin{proof}
Since the total complex of $N^{\ast,\ast,\ast}$ is equal to $\ccc(\mathcal{D})$ and
\begin{linenomath*}
$$\cc(\mathcal{D}) = \ccc(\mathcal{D})[-n_{-}(\mathcal{D})]\{n_{+}(\mathcal{D}) - 2 n_{-}(\mathcal{D})\},$$
\end{linenomath*}
we only need to check, that the application of the appropriate shift to $N$ results in $M$.
\begin{linenomath*}
\begin{align*}
&N^{i,j,k}[-n_{-}(\mathcal{D})]\{n_{+}(\mathcal{D})-2n_{-}(\mathcal{D})\} = \\
&= \bigoplus_{\alpha \in \mathcal{B}_{i}(X)} \ccc^{j,k}(\mathcal{D}_{\alpha})[-n_{-}(\mathcal{D})]\{i + n_{+}(\mathcal{D}) - 2n_{-}(\mathcal{D})\} = \\
&= \bigoplus_{\alpha \in \mathcal{B}_{i}(X)} \cc^{j,k}(\mathcal{D}_{\alpha})[c(\mathcal{D}_{\alpha})]\{i + 2c(\mathcal{D}_{\alpha}) + n_{+}(\mathcal{D}) - n_{+}(\mathcal{D}_{\alpha})\} = \\
&= \bigoplus_{\alpha \in \mathcal{B}_{i}(X)} \cc^{j,k}(\mathcal{D}_{\alpha})[c(\mathcal{D}_{\alpha})]\{i + 3c(\mathcal{D}_{\alpha}) + \# X\} = M^{i,j,k},
\end{align*}
\end{linenomath*}
because
\begin{linenomath*}
\begin{align*}
&n_{+}(\mathcal{D}) - n_{+}(\mathcal{D}_{\beta}) = \\
&= \# \cross(\mathcal{D}) - n_{-}(\mathcal{D}) - (\# \cross(\mathcal{D}) - \#X - n_{-}(\mathcal{D}_{\beta})) = \\
&= c(D_{\beta}) + \#X.
\end{align*}
\end{linenomath*}
\end{proof}

\begin{defn}\label{defn: filtration}
Let
\begin{linenomath*}
$$\mathcal{F}_{i}(\mathcal{D},X) = \Tot(\bigoplus_{j \geq i} M^{j,\ast,\ast}),$$
\end{linenomath*}
for $0 \leq i \leq \# X$. The family $\{\mathcal{F}_{i}(\mathcal{D},X)\}_{i}$ is a filtration of the Khovanov complex. This filtration is the column filtration of the bicomplex $M^{\ast,\ast,\ast}$, see~Theorem~2.15 in~\cite{McCleary}.
\end{defn}

\begin{thm}[\cite{EverittTurner}]
  \label{thm:construction}
Let $\mathcal{D}$ be a link diagram and let $X \subset \cross(\mathcal{D})$. The pair $(\mathcal{D}, X)$ determines a spectral 
sequence
\begin{linenomath*}
$$\{E^{\ast,\ast,\ast}_r, d_r\}$$
\end{linenomath*}
of graded modules converging to $\kh^{\ast,\ast}(\mathcal{D},R)$ such that
\begin{linenomath*}
$$E_1^{i,j,\ast} = \bigoplus_{\beta \in \mathcal{B}_i(X)} \kh^{j,\ast}(\mathcal{D}_{\beta},R)[c(\mathcal{D}_{\beta})]\{i + 3c(\mathcal{D}_{\beta}) + \#X\},$$
\end{linenomath*}
where $c(\mathcal{D}_{\beta}) = n_{-}(\mathcal{D}_{\beta}) - n_{-}(\mathcal{D})$.
\end{thm}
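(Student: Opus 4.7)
The plan is to invoke the standard spectral sequence associated to the column filtration of a bounded bicomplex, applied to the bicomplex $M^{\ast,\ast,\ast}$ constructed just above the theorem. All the hard work has really been packed into the construction of $M$ and the verification that $\Tot(M) = \cc(D)$; the theorem itself is a direct unpacking of what the first page of this spectral sequence looks like.

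First I would recall the standard result (e.g.\ \cite{McCleary}*{Thm.~2.15}) that a bounded bicomplex equipped with its column filtration $\{\mathcal{F}_i\}_i$, as in Definition~\ref{defn: filtration}, gives rise to a spectral sequence converging to $H^{\ast}(\Tot(M))$. By the proposition preceding the theorem, $\Tot(M) = \cc(D)$, so the abutment is precisely $\kh^{\ast,\ast}(D)$. Boundedness of the filtration is immediate since $0 \leq i \leq \#X$, so there are no convergence issues to worry about.

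Next I would identify the $E_0$ and $E_1$ pages. By the standard formula for the column filtration,
\begin{equation*}
E_0^{i,j,\ast} = M^{i,j,\ast} = \bigoplus_{\beta \in \mathcal{B}_{i}(X)} \cc^{j,\ast}(D_{\beta})[c(D_{\beta})]\{i + 3 c(D_{\beta}) + \#X\},
\end{equation*}
with $d_0$ induced by the vertical differential, which on each summand is (up to a sign) the Khovanov differential of $D_\beta$. Taking cohomology in the $j$-direction commutes with the direct sum and with the shift operators $[\cdot]$ and $\{\cdot\}$, yielding
\begin{equation*}
E_1^{i,j,\ast} = \bigoplus_{\beta \in \mathcal{B}_i(X)} \kh^{j,\ast}(D_{\beta})[c(D_{\beta})]\{i + 3c(D_{\beta}) + \#X\},
\end{equation*}
which is exactly the formula in the statement.

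The only minor point to check carefully is that the vertical differentials in $M$ really do coincide, summand by summand, with the Khovanov differentials of the $D_\beta$ up to signs that do not affect cohomology. This is built into the definition of $M$, where vertical differentials were declared to be sums of $\pm 1$ multiples of Khovanov differentials, and the construction of $N$ from the iterated mapping cone decomposition guarantees that the horizontal/vertical decomposition of $d_{\cc(D)}$ is the intended one. I expect this bookkeeping, matching signs and shifts precisely so that the identification $\Tot(M) \cong \cc(D)$ is compatible with the grading conventions, to be the main (really the only) obstacle — everything else is formal from the theory of bicomplexes.
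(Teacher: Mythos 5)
Your proposal is correct and matches the paper's proof exactly: the paper also simply cites the spectral sequence of the column filtration of the bicomplex $M$ (McCleary, Thm.\ 2.15), with the $E_1$ page read off as you describe. The extra paragraph you include about identifying the $E_0$ and $E_1$ pages is a helpful unpacking of what the paper leaves implicit, but it is the same argument.
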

\begin{proof}
This is the spectral sequence associated to the column filtration of the bicomplex $M$ as in Theorem~2.15 in~\cite{McCleary}.
\end{proof}

Suppose now, that $\mathcal{D}$ is an $m$-periodic link diagram. If $X \subset \cross{\mathcal{D}}$ is invariant under the action of $\z{m}$ then for any $0 \leq k \leq \#X$ there exists an induced action on $\mathcal{B}_k(X)$. Therefore, members of the filtration are invariant under the action of~$\z{m}$.

\begin{prop}\label{prop:invariant_filtration}
If $X \subset \cross{\mathcal{D}}$ is an invariant subset, then for any \(i \in \zzz\), \(\mathcal{F}_{i}(\mathcal{D},X)\) is a chain complex of $\Lambda_m$-modules.
\end{prop}

Let us assume, for the remainder part of this section, that $X$ consists of a single orbit of crossings.

\begin{defn}
  Let $0 \leq i \leq m$ and $d \mid \gcd(m,i)$. Analogously as in Definition~\ref{defn: Kauffman states of periodic diagram} define 
  \begin{linenomath*}
    $$\mathcal{B}_{i}^{d}(X) = \{ \alpha \in \mathcal{B}_{i}(X) \colon \forall_{g \in \z{d} \subset \z{m}} g \cdot \alpha = \alpha \}.$$
  \end{linenomath*}
  Moreover, denote by $\overline{\mathcal{B}}_{i}^{d}(X)$ the quotient of $\mathcal{B}_{i}^{d}(X)$ by $\z{m}$.
\end{defn}

\begin{lemma}
If $\alpha \in \mathcal{B}_{i}^{d}(X)$, then $\mathcal{D}_{\alpha}$ is $\z{d}$-periodic.
\end{lemma}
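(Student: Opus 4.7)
The plan is to unpack what $\Iso(\alpha) = \z{d}$ means and then check the three conditions in the definition of a $\z{d}$-periodic link. Since $X \subset \cross(D)$ is an invariant subset under the $\z{n}$-action on crossings, there is an induced $\z{n}$-action on $\mathcal{B}_{i}(X)$, and the statement $\alpha \in \mathcal{B}_{i}^{d}(X)$ precisely means that the unique subgroup $\z{d} \subset \z{n}$ fixes $\alpha$, i.e.\ $\alpha(g \cdot c) = \alpha(c)$ for every $g \in \z{d}$ and every $c \in \cross(D)$.

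First I would restrict the $\z{n}$-symmetry of $D$ to the subgroup $\z{d} \subset \z{n}$; this gives a diffeomorphism of $(S^{3},D)$ of order $d$ whose fixed point set $F$ is the unknot and is disjoint from $D$. The key observation is then that smoothing a crossing of $D$ is a local modification performed in a small disk neighborhood of that crossing, far from $F$, and that $\z{d}$-equivariantly the same modification is carried out at every crossing in a single $\z{d}$-orbit of crossings.

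Next I would argue invariance of $D_{\alpha}$ under the $\z{d}$-action. The $\z{d}$-action permutes crossings within $X$ (because $X$ is $\z{n}$-invariant) and also within $\cross(D) \setminus X$. For $c \in X$, $\z{d}$-invariance of $\alpha$ ensures that $\alpha(g \cdot c)$ and $\alpha(c)$ agree, so the same local smoothing (either $0$- or $1$-smoothing) is applied to every crossing in the $\z{d}$-orbit of $c$; hence after smoothing, the $\z{d}$-action carries $D_{\alpha}$ to itself. For $c \notin X$ the crossing is left unresolved, and again the $\z{d}$-action permutes such crossings among themselves, so these also contribute a $\z{d}$-invariant portion of $D_{\alpha}$.

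Finally I would verify the remaining conditions of the definition: the fixed point set of the $\z{d}$-action on $S^{3}$ is still the unknot $F$, inherited verbatim from the $\z{n}$-action, and $D_{\alpha}$ remains disjoint from $F$ since smoothings take place in small disks around the crossings of $D$, all of which lie in $S^{3} \setminus F$. This establishes all three conditions, so $D_{\alpha}$ is $\z{d}$-periodic. I do not expect any real obstacle here; the statement is essentially a bookkeeping check that local $\z{d}$-equivariant smoothings produce a $\z{d}$-equivariant diagram, and the only mildly subtle point is making explicit that the hypothesis $\Iso(\alpha) = \z{d}$ translates to $\alpha$ being constant on $\z{d}$-orbits in $X$.
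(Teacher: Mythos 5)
Your proof is correct and takes essentially the same approach as the paper: the paper's proof is a one-line remark that $D_{\alpha}$ for $\alpha$ with $\Iso(\alpha)=\z{d}$ has the same repeating $\z{d}$-symmetric structure as the Kauffman states in $\mathcal{S}_{i}^{d}(D)$, which is precisely the observation you spell out (that $\alpha$ being constant on $\z{d}$-orbits in $X$ makes the local smoothings $\z{d}$-equivariant, so the restricted action preserves $D_{\alpha}$). You simply make explicit the routine checks (restriction of the semi-free action to the $\z{d}$-subgroup, unchanged fixed-point axis, disjointness) that the paper leaves implicit.
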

\begin{proof}
The lemma follows readily, because such diagrams have similar structure as the Kauffman states belonging to $\mathcal{S}_{i}^{d}(\mathcal{D})$, as in Definition \ref{defn: Kauffman states of periodic diagram}.
\end{proof}

\begin{prop}\label{prop:quotients_of_filtration_as_ZG_modules}
For $0 \leq i \leq m$
\begin{linenomath*}
\begin{align*}
&\mathcal{F}_{i}(\mathcal{D},X) / \mathcal{F}_{i+1}(\mathcal{D},X) = \\
&= \bigoplus_{d \mid \gcd(m,i)} \bigoplus_{\alpha \in \overline{\mathcal{B}}_{i}^{d}(X)} \ind{\z{m}}{\z{d}} \left( \cc(\mathcal{D}_{\alpha})[t(\alpha)]\{q(\alpha)\} \otimes_{\zzz} \zzz_{-}^{s(m,r,d)} \right),
\end{align*}
\end{linenomath*}
where \(s(m,r,d)\) is defined in Lemma~\ref{lemma: action on Khovanov complex}. Moreover
\begin{linenomath*}
\begin{align*}
t(\alpha) &= c(\mathcal{D}_{\alpha}) + i, \\
q(\alpha) &= i + 3c(\mathcal{D}_{\alpha}) + m.
\end{align*}
\end{linenomath*}
\end{prop}
\begin{proof}
Proposition is a direct consequence of Lemma~\ref{lemma: action on Khovanov complex}.
\end{proof}

\begin{thm}
  \label{thm-skein-spectral-sequence}
  For any $m$-periodic diagram $\mathcal{D}$ and any $\Lambda_m$-module $M$ there exists a spectral sequence $\{E_{r}^{\ast,\ast}(M), d_r\}$ which converges to $\kh_{\z{m}}(\mathcal{D}, M)$ with
  \begin{linenomath*}
    \begin{align*}
      E_{q}^{i,\ast} = \bigoplus_{d \mid \gcd(m,i)} \bigoplus_{\alpha \in \overline{\mathcal{B}}_{i}^{d}(X)} \kh_{\z{d}}^{\ast, \ast}\left(\mathcal{D}_{\alpha}, (\res{\z{m}}{\z{d}} M) \otimes_{\zzz} \zzz_{-}^{s(m,r,d)}\right)[t(\alpha)]\{q(\alpha)\}.
  \end{align*}
  \end{linenomath*}
\end{thm}
\begin{proof}
    By~\cite{StacksProjectDerived}*{Lemma \StacksTag{015W}{26.14}} we obtain a spectral sequence $\{E_{r}^{\ast,\ast}(M),d_r\}$ converging to $\kh_{\z{m}}^{\ast,\ast}(\mathcal{D},M)$ such that
  \begin{linenomath*}
    $$E_1^{i,j}(M) = \Ext{j}{\Lambda_m}{M}{\mathcal{F}_{i}(X)/\mathcal{F}_{i+1}(X)}.$$
  \end{linenomath*}
  Proposition \ref{prop:quotients_of_filtration_as_ZG_modules} implies that the quotient $\mathcal{F}_{i}(X)/\mathcal{F}_{i+1}(X)$ decomposes into a direct sum of modules of the form $\ind{\z{m}}{\z{d}} \cc(\mathcal{D_{\alpha}})[t(\alpha)]\{q(\alpha)\}$, where $0 \leq k \leq m$ and $\mathcal{D}_{\alpha}$ is $d$-periodic.
  By Shapiro’s Lemma
  \begin{linenomath*}
    \begin{align*}
      &\Ext{\ast}{\Lambda_m}{M}{\ind{\z{m}}{\z{d}} \left(\cc(\mathcal{D_{\alpha}})[t(\alpha)]\{q(\alpha)\} \otimes_{\zzz} \zzz_{-}^{s(m,r,d)}\right)} \cong \\
      &\cong \Ext{\ast}{\Lambda_d}{\res{\z{m}}{\z{d}} M}{\cc(\mathcal{D_{\alpha}})[t(\alpha)]\{q(\alpha)\} \otimes_{\zzz} \zzz_{-}^{s(m,r,d)}} \cong \\
      &\cong \Ext{\ast}{\Lambda_d}{\res{\z{m}}{\z{d}} M \otimes_{\zzz} \zzz_{-}^{s(m,r,d)}}{\cc(\mathcal{D_{\alpha}})[t(\alpha)]\{q(\alpha)\}} \cong \\
      &\cong \kh_{\z{d}}(\mathcal{D},(\res{\z{m}}{\z{d}} M) \otimes_{\zzz} \zzz_{-}^{s(m,r,d)}).
    \end{align*}
  \end{linenomath*}
  The last isomorphism follows from the fact that for any \(\Lambda_m\)-modules \(M\) and \(N\)
  there exists an isomorphism
  \begin{linenomath*}
  \begin{align*}
    \Hom{}{\Lambda_m}{M}{N} &\xrightarrow{\cong} \Hom{}{\Lambda_m}{M \otimes_{\zzz} \zzz_{-}}{N \otimes_{\zzz} \zzz_{-}}, \\
    f &\mapsto f \otimes \operatorname{id}.
  \end{align*}
\end{linenomath*}
\end{proof}

In Section~\ref{sec:integr-equiv-kh} we studied equivariant Khovanov homology with coefficients in cyclotomic rings.
Assume now that \(m = p^n\) is a power of an odd prime.
As a special case of Theorem~\ref{thm-skein-spectral-sequence} we obtain a spectral sequence for \(\kh_{\z{p^n}}^{\ast,\ast,p^{m-s}}(L)\), for any \(0 \leq s \leq m\).

\begin{cor}\label{cor:spectral_sequence_equivariant_homology_odd_period}
  Let $L$ be a $p^{m}$-periodic link, where $p$ is an odd prime, and let $X \subset \cross{\mathcal{D}}$ consists of a single orbit.
  For any \(0 \leq s \leq m\) we have
\begin{linenomath*}
\begin{align*}
E^{0,j}_{1}(\cyclotomicRing{p^{m-s}}) &= \kh_{\z{p^m}}^{j,\ast,p^{m-s}}(\mathcal{D}_{\alpha_0})[c(\mathcal{D}_{\alpha_0})]\{q(\alpha_{0}) \}, \\
E^{p^{m}, j}_{1}(\cyclotomicRing{p^{m-2}}) &= \kh_{\z{p^m}}^{j, \ast, p^{m-s}} (\mathcal{D}_{\alpha_{1}})[c(\mathcal{D}_{\alpha_1})] \{ q(\alpha_{1}) \},\\
E^{i,j}_{1}(\cyclotomicRing{p^{m-s}}) &= \bigoplus_{0 \leq v \leq u_{i}} \bigoplus_{\alpha \in \overline{\mathcal{B}}_{i}^{p^{v}}(X)} \kh_{\z{p^{v}}}^{j, \ast, 
k(v,s)}(\mathcal{D}_{\alpha})[c(\mathcal{D}_{\alpha})]\{q(\alpha)\}^{\ell(v,s)}
\end{align*}
\end{linenomath*}
where $0 < i < p^{m}$. Above $\alpha_{0}$, $\alpha_{1}$ are the unique elements of $\mathcal{B}_0(X)$ and $\mathcal{B}_{p^{m}}(X)$, respectively.
Moreover, $i = p^{u_{i}} g$, where $\gcd(p,g) = 1$ and
\begin{linenomath*}
\begin{align*}
q(\alpha) &= i + 3c(\mathcal{D}_{\alpha}) + p^{m}, \\
k(s,v) &= \left\{
\begin{array}{ll}
1, & v \leq s, \\
p^{v-s}, & v > s, \\
\end{array}
\right. \\
\ell(s,v) &= \left\{
\begin{array}{ll}
\varphi(p^{m-s}), & v \leq s, \\
p^{m-v}, & v > s. \\
\end{array}
\right.
\end{align*}
\end{linenomath*}
\end{cor}
\begin{proof}
  Theorem follows from the following identity
  \[\res{\z{p^m}}{\z{p^v}} \cyclotomicRing{p^{m-s}} =
    \begin{cases}
      \zzz^{\varphi(p^{m-s})},  & v \leq s, \\
      \cyclotomicRing{p^{v-s}}^{p^{m-v}}, & v > s,
    \end{cases}\]
  which is Proposition~2.1.19~in~\cite{politarczyk_wojciech_khovanov_2015}.
\end{proof}

The case \(m = 2\) will be important in Section~\ref{sec:computations}.

\begin{thm}\label{thm:spectral_sequence_equivariant_homology_even_period}
  The $E_{1}$ pages of the respective spectral sequences for $2$-periodic links are given below.
  \begin{linenomath*}
    \begin{align*}
      E_{1}^{0,j}(\zzz) &= \kh_{\z{2}}^{j,\ast,s(\mathcal{D}_{00})}(\mathcal{D}_{00})\{3 c(\mathcal{D}_{00}) + 2\}, \\
      E_{1}^{1,j}(\zzz) &= \kh^{j,\ast}(\mathcal{D}_{01})\{3 c(\mathcal{D}_{01}) + 3\}, \\
      E_{1}^{2,j}(\zzz) &= \kh_{\z{2}}^{j,\ast,3 - s(c(\mathcal{D}_{11}))}(\mathcal{D}_{11})\{3 c(\mathcal{D}_{11}) + 4\}, \\
      E_{1}^{0,j}(\zzz_{-}) &= \kh_{\z{2}}^{j,\ast,3-s(D_{00})}(\mathcal{D}_{00})\{3 c(\mathcal{D}_{00}) + 2\}, \\
      E_{1}^{1,j}(\zzz_{-}) &= \kh^{j,\ast}(\mathcal{D}_{01})\{3 c(\mathcal{D}_{01}) + 3\}, \\
      E_{1}^{2,j}(\zzz_{-}) &= \kh_{\z{2}}^{j,\ast,s(c(\mathcal{D}_{11}))}(\mathcal{D}_{11})\{3 c(\mathcal{D}_{11}) + 4\},
    \end{align*}
  \end{linenomath*}
  where $s \in \{1,2\}$ and
  \begin{linenomath*}
    $$s(\mathcal{D}_{\alpha}) \equiv \frac{c(\mathcal{D}_{\alpha})}{2} \pmod {2}.$$
  \end{linenomath*}
\end{thm}
\begin{proof}
  This theorem follows directly from~\ref{thm-skein-spectral-sequence}.
\end{proof}

\section{Equivariant Khovanov homology of torus links $T(n,2)$}
\label{sec:computations}
In Section~\ref{subsec: equiv kh q} we showed that in some cases simple dimension counting arguments can be used to determine equivariant Khovanov homology.
In particular for torus link \(T(n,2)\) we were able to compute \(\kh_{\z{m}}^{\ast,\ast,d}(T(n,2))\) for any \(m \mid n\) and \(d \mid m\) such that \(d>2\).
In this section we will focus on the case \(m=2\).
In the remainder part of this section we will work with a fixed \(2\)-periodic diagram of $T(m,2)$ obtained by closing the braid diagram depicted in Figure~\ref{fig:T(n,2) link}.

Before we start, let us introduce some notation.
\begin{linenomath*}
  \begin{align}
    \khp(L;\qqq) &= \sum_{i,j}t^{i}q^j \dim_{\qqq} \kh^{i,j}(L;\qqq),\label{eq:khp} \\
    \khp_{1}(L) &= \sum_{i,j} t^i q^j \dim_{\qqq}\kh_{\z{2}}^{i,j,1}(L;\qqq) \label{eq:eqviariant-khp-trivial-rep}, \\
    \khp_{2}(L) &= \sum_{i,j} t^i q^j \dim_{\qqq}\kh_{\z{2}}^{i,j,2}(L;\qqq) \label{eq:equivariant-khp-sign-rep}.
  \end{align}
\end{linenomath*}
We will say that~\eqref{eq:khp} is the \textit{Khovanov polynomial} of $L$ and~\eqref{eq:eqviariant-khp-trivial-rep} together with~\eqref{eq:equivariant-khp-sign-rep} are \textit{equivariant Khovanov polynomials} of $L$.
Notice that Proposition~\ref{prop: structure rational equiv kh} implies that 
\begin{linenomath*}
  \begin{equation}
    \label{eq:equivariant-nonequivariant-kh}
    \khp(L;\qqq) = \khp_{1}(L) + \khp_{2}(L).
  \end{equation}
\end{linenomath*}

The main theorem of this section is stated below.

\begin{thm}\label{thm: computation equiv Kh T_n_2}
Equivariant Khovanov polynomials of the $2$-periodic link $T(m,2)$, \(m>1\), are given below.
\begin{linenomath*}
  \begin{align*}
    \khp_{1}(T(2m+1, 2)) &= \khp(T(2m+1,2);\qqq), \\
    \khp_{2}(T(2m+1, 2)) &= 0, \\
    \khp_{1}(T(2m, 2)) &= \khp(T(2m,2);\qqq) - t^{2m}q^{6m}, \\
    \khp_{2}(T(2m, 2)) &= t^{2m} q^{6m}.
  \end{align*}
\end{linenomath*}
\end{thm}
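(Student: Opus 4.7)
The plan is to proceed by induction on $n$, using the even-case of the spectral sequence from Theorem \ref{thm:spectral_sequence_equivariant_homology} applied to a suitable $2$-periodic diagram $D$ of $T(n,2)$ with $X \subset \cross(D)$ a single $\z{2}$-orbit consisting of two crossings. The base case $T(2,2)$ is Example \ref{ex: computation for hopf link}; to launch the odd family one also computes $T(3,2)$ directly, either by the same spectral sequence applied to a $2$-periodic presentation of the trefoil, or by noting that each homogeneous piece of $\kh(T(3,2); \qqq)$ is at most one-dimensional and then tracking the sign of the induced $\z{2}$-action on each generator via the convention of equation \eqref{eqn: permutation map}.

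For the inductive step I would identify the four resolutions $D_{\alpha_{00}}, D_{\alpha_{01}}, D_{\alpha_{10}}, D_{\alpha_{11}}$ of the two crossings in $X$. For an orbit chosen in the standard presentation, $D_{\alpha_{00}}$ is a $2$-periodic diagram of $T(n-2,2)$ (possibly with extra unknotted components), $D_{\alpha_{11}}$ is a $2$-periodic trivial link whose equivariant Khovanov homology is computed along the lines of Proposition \ref{prop: comp equiv kh trivial links}, and $D_{\alpha_{01}} \cong D_{\alpha_{10}}$ with the two resolutions freely permuted by $\z{2}$, so their joint contribution to the equivariant complex is induced from the trivial subgroup and hence, after tensoring with $\cyclotomicField{d}$, contributes only to the trivial isotypic piece. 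Combining the inductive hypothesis for $T(n-2,2)$ with these inputs, one writes down the $E_1$ pages of both ${}_{1}E$ and ${}_{2}E$. The $d_1$ differential agrees, by naturality of the Cartan--Eilenberg construction and the Eckmann--Shapiro identifications used in the proof of Theorem \ref{thm:spectral_sequence_equivariant_homology}, with the classical Khovanov long-exact-sequence connecting map; comparing total dimensions against Proposition \ref{prop:kh_T_2_2n} via the isomorphism of Theorem \ref{thm:total_equivariant_homology} then forces collapse at $E_2$ rationally.

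The main obstacle is bookkeeping the sign twists $\zzz_{-}^{\otimes c(D_{\alpha_{00}})/2}$ and $\zzz_{-}^{\otimes c(D_{\alpha_{11}})/2+1}$ appearing in Proposition \ref{prop:quotient 2}, since these precisely determine in which isotypic summand each surviving generator lands. Concretely, for $T(2n,2)$ the top class at bidegree $(2n, 6n)$ comes from the all-$1$ Kauffman state, which is $\z{2}$-invariant, and one computes that the relevant twist parity forces it into the sign representation, producing the summand $t^{2n} q^{6n}$ in $\khp_{2,2}$; for $T(2n+1,2)$ the corresponding twist parities vanish uniformly and every surviving class lies in the trivial representation, yielding $\khp_{2,2}(T(2n+1,2)) = 0$. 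An alternative route, which bypasses the spectral sequence but requires equally careful sign-tracking, is to use Proposition \ref{prop: structure rational equiv kh} to reduce the problem to determining the $\z{2}$-action on Khovanov's explicit rational generators of $\kh(T(n,2); \qqq)$ directly via the sign formula in Lemma \ref{lemma: action on Khovanov complex}.
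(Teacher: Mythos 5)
Your overall strategy — induction on $n$ via the even-case spectral sequence of Theorem \ref{thm:spectral_sequence_equivariant_homology}, followed by dimension-matching against $\khp(T(n,2))$ using Proposition \ref{prop: structure rational equiv kh} — is exactly what the paper does. But the substantive content of the paper's proof lives in Lemmas \ref{lemma: equiv kh comp 1}--\ref{lemma: equiv kh comp 3}, and your sketch of that layer has a real gap.

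First, your identifications of the resolutions do not match the paper's. With the diagram and orbit the paper chooses (Figure \ref{fig:T(n,2) link}), $D_{\alpha_{00}}$ is $T(n-1,2)\sqcup U$ and $D_{\alpha_{11}}$ is $T(n-2,2)$, whereas you assert $D_{\alpha_{00}}\cong T(n-2,2)$ and $D_{\alpha_{11}}$ a trivial link. You are evidently imagining the standard $n$-crossing closure of $\sigma_1^n$ with an antipodal orbit; that is a legitimate different choice, but it produces different $E_1$-pages and, crucially, different sign twists $\zzz_-^{\otimes c(D_\alpha)/2(+1)}$ (Proposition \ref{prop:quotient 2}), because $c(D_{\alpha_{11}})$ is then large rather than $0$. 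You cannot simply invoke Proposition \ref{prop: comp equiv kh trivial links} for your $D_{\alpha_{11}}$ either: that proposition is stated for odd primes, and your trivial-link diagram carries many crossings, so the twist parity is exactly the quantity you must pin down. Nothing in the proposal does this.

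Second, and more seriously, the step you flag as ``the main obstacle'' — deciding which isotypic summand the surviving class lands in — is asserted rather than proved. Your heuristic (``the top class comes from the all-$1$ Kauffman state, which is $\z{2}$-invariant, and the twist parity forces it into the sign representation'') is not robust: applying Lemma \ref{lemma: action on Khovanov complex} to the all-$1$ state of your diagram gives a sign of $(-1)^{s(2,r,2)}$ whose parity varies with $n$, so it does not uniformly place the top class in the sign representation. The paper avoids this trap precisely by not arguing at the level of a chosen chain representative; instead it picks a diagram where the sign twists vanish ($c(D_{\alpha})=0$ throughout), locates the surviving class in the middle column, and uses Lee homology and Rasmussen's $s$-invariant (via \cite{Lee}*{Thm. 4.4} and \cite{Rasmussen}*{Prop. 3.3}) in Lemma \ref{lemma: equiv kh comp 3} to determine the one ambiguous differential. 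Your proposal contains no analogue of this lemma and no substitute for the thin-complex argument, which is the genuinely hard part of the computation. Until the middle-column homology and the relevant sign are actually computed for your diagram, the proof is incomplete.
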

The proof of Theorem~\ref{thm: computation equiv Kh T_n_2} will occupy the rest of this section.

\begin{figure}
  \centering
  \def\svgscale{0.5}
  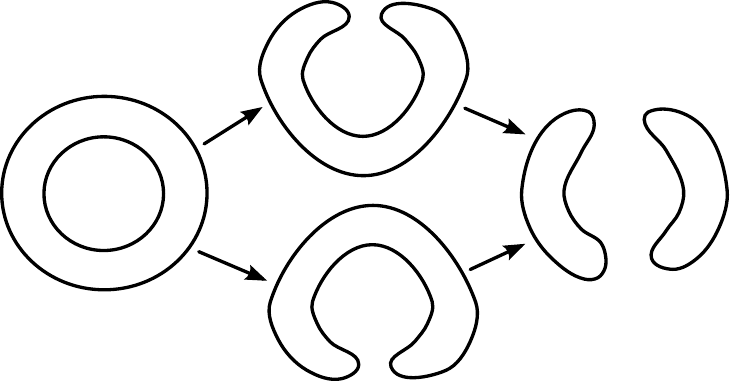
  \caption{Anticommutative cube for $T(2,2)$}
  \label{fig:img_hopf_cube}
\end{figure}

Let us start with detailed computations for the case \(m=2\).
Let $\mathcal{D}_2$ denote the diagram of the Hopf link that can be obtained as the closure of the diagram in Figure~\ref{fig:T(n,2) link}.
Khovanov bracket of \(T(2,2)\) is depicted on Figure~\ref{fig:img_hopf_cube} The action of $\z{2}$ on the Khovanov complex, as discussed in Section~\ref{sec:kh-bracket-periodic-links}, is given by:
  \begin{linenomath*}
    \begin{align*}
      \cc^{0,\ast}(T(2,2))&: \quad a \mapsto a, \\
      \cc^{1,\ast}(T(2,2))&: \quad (a,b) \mapsto (b,a), \\
      \cc^{2,\ast}(T(2,2))&: \quad a \otimes b \mapsto -b \otimes a.
    \end{align*}
  \end{linenomath*}
  Using the above formulas, we can write down a list of generators of the complexes $\cc_{\z{2}}^{\ast,\ast,1}(\mathcal{D},\qqq)$ and $\cc_{\z{2}}^{\ast,\ast,2}(\mathcal{D}, \qqq)$, see Table~\ref{tab: computations kh1 of Hopf link}, and obtain
  \begin{table}
    \centering
    \begin{tabular}{r|c|c|c}
              & $\cc^{0,j,1}(T(2,2))$          & $\cc^{1,j,1}(T(2,2))$     & $\cc^{2,j,1}(T(2,2))$ \\ \hline
      $j = 4$ & $1 \otimes 1$,                & $(1,1)$                  & $1 \otimes X - X \otimes 1$ \\
      $j = 2$ & $1 \otimes X$, $X \otimes 1$  & $(X,X)$                  &    \\
      $j = 0$ & $X \otimes X$                 &                          &    \\ \hline \hline
              & $\cc^{0,j,2}(T(2,2))$          & $\cc^{1,j,2}(T(2,2))$     & $\cc^{2,j,2}(T(2,2))$ \\ \hline
      $j = 6$ &                               &                          & $1 \otimes 1$ \\
      $j = 4$ &                               & $\frac{1}{2}(-1,1)$      & $1 \otimes X + X \otimes 1$ \\
      $j = 2$ &                               & $\frac{1}{2}(-X,X)$      & $X \otimes X$ \\
    \end{tabular}
    \caption{Generators of $\cc_{\z{2}}^{\ast,\ast,1}(T(2,2); \qqq)$ and $\cc_{\z{2}}^{\ast,\ast,2}(T(2,2);\qqq)$.}
    \label{tab: computations kh1 of Hopf link}
  \end{table}
  \begin{linenomath*}
    \begin{align}
      \khp_{1}(T(2,2))(t,q) &= 1 + q^2 + t^2 q^4, \label{eq:equiv-khp-Hopf-link-trivial-rep}\\
      \khp_{2}(T(2,2))(t,q) &= t^2 q^6. \label{eq:equiv-khp-Hopf-link-sign-rep}
    \end{align}
  \end{linenomath*}

Before proceeding further, let us recall that, according to~\cite{Khovanov1}, Khovanov polynomial of the torus link $T(m,2)$ is equal to
\begin{linenomath*}
\begin{align}
\khp(T(2k,2)) &= q^{2k-2} + q^{2k} + t^2 q^{2k+2} (1 + tq^4) \sum_{j = 0}^{k-2} t^{2j} q^{4j} \label{eq:kh-torus-link} \\
              &+ t^{2k} q^{6k - 2} + t^{2k} q^{6k} \nonumber \\
\khp(T(2k+1,2)) &= q^{2k-1} + q^{2k+1} + t^{2}q^{2k+3}(1+tq^{4}) \sum_{j = 0}^{k-1} t^{2j} q^{4j} \label{eq:kh-torus-knot}
\end{align}
\end{linenomath*}
for $k > 0$.

\begin{figure}
  \centering
  \def\svgscale{0.7}
  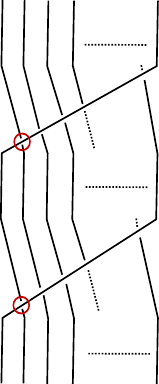
 \caption{The $2$-periodic diagram \(\mathcal{D}_m\) of $T(m,2)$, where \(m\) denotes the number of strands of the depicted braid. The chosen orbit of crossings is marked with red
circles.}
  \label{fig:T(n,2) link}
\end{figure}

\begin{figure}
  \centering
  \def\svgscale{0.5}
  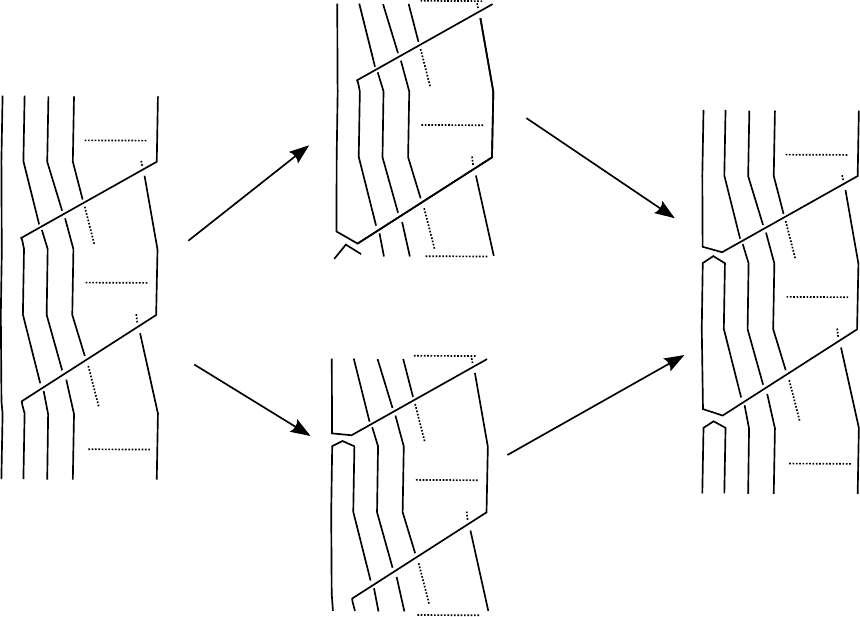
  \caption{Bicomplex associated to the $2$-periodic diagram of $T(m,2)$ from figure \ref{fig:T(n,2)
link}.}
  \label{fig:T(n,2) bicomplex}
\end{figure}

\begin{figure}
  \centering
  \def\svgscale{0.6}
  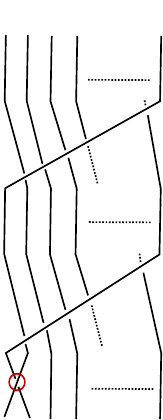
  \caption{Diagram $\mathcal{D}'$ isotopic to the diagram of the $\mathcal{D}_{01}$.}
  \label{fig: T_n_2 D' diag}
\end{figure}


Denote by $\mathcal{D}_{m}$ the $2$-periodic diagram representing the torus link $T(m,2)$ obtained by closing braid diagram from Figure~\ref{fig:T(n,2) link}.
The proof will be inductive with respect to the number of orbits. The base case \(m=2\) was already established. The inductive step will be performed with the aid of the skein spectral sequence. In order to do that choose the orbit of crossings depicted in Figure~\ref{fig:T(n,2) link}.

Let us study the first page of the respective skein spectral sequence.

\begin{lemma}\label{lemma: equiv kh comp 1}
The zeroth and second columns of the $E_{1}$ page of the respective skein spectral sequence applied to the $2$-periodic diagram $\mathcal{D}_m$ is given by
\begin{linenomath*}
  \begin{align*}
    E_{1}^{0,j,k}(\qqq) &= \kh_{\z{2}}^{j,k-1,1}(\mathcal{D}_m;\qqq) \oplus \kh_{\z{2}}^{j,k-3,1}(\mathcal{D}_m;\qqq), \\
    E_{1}^{0,j,k}(\qqq_{-}) &= \kh_{\z{2}}^{j,k-1,2}(\mathcal{D}_m;\qqq) \oplus \kh_{\z{2}}^{j,k-3,2}(\mathcal{D}_m;\qqq), \\
    E_{1}^{2,j,k}(\qqq) &= \kh_{\z{2}}^{j,k-4,2}(\mathcal{D}_{m-2};\qqq), \\
    E_{1}^{2,j,k}(\qqq_{-}) &= \kh_{\z{2}}^{j,k-4,1}(\mathcal{D}_{m-2};\qqq).
  \end{align*}
\end{linenomath*}
\end{lemma}
\begin{proof}
  From Figure~\ref{fig:T(n,2) bicomplex} it is not hard to see, that the diagram $\mathcal{D}_{00}$, i.e. the leftmost diagram, is 
  the split sum $\mathcal{D}_{m-1} \sqcup \mathcal{U}$, where $\mathcal{U}$ is an unknot diagram. Additionally, $\mathcal{D}_{00}$ inherits 
  orientation from $\mathcal{D}_m$, therefore $c(\mathcal{D}_{00})=0$, because $\mathcal{D}_m$ was oriented so that all crossings 
  are positive. This concludes the proof.

  Consider the rightmost diagram $\mathcal{D}_{11}$ in Figure~\ref{fig:T(n,2) bicomplex}. We have $\mathcal{D}_{11} = \mathcal{D}_{m-2}$. It is not hard to check that we can orient $\mathcal{D}_{11}$ in such a way that all crossings are positive, therefore $c(\mathcal{D}_{11})=0.$
\end{proof}

\begin{lemma}\label{lemma: equiv kh comp 3}
The first column of the $E_{1}$ page of the respective skein spectral sequence of the $2$-periodic diagram $\mathcal{D}_m$ is given by
\begin{linenomath*}
  $$E_{1}^{1,j,k}(\qqq) = E_{1}^{1,j,k}(\qqq_{-}) = \left\{
    \begin{array}{ll}
      \kh^{j,k-4}(\mathcal{D}_{2k-2}), & j <  2k-2, \\
      \qqq\{6k-4\}, & j = 2k-2, \\
      \qqq\{6k\}, & j = 2k-1, \\
      0, & j > 2k-1,\\
    \end{array}
  \right.$$
\end{linenomath*}
if $m = 2k$ and
\begin{linenomath*}
  $$E_{1}^{1,j,k}(\qqq) = E_{1}^{1,j,k}(\qqq_{-}) = \left\{
    \begin{array}{ll}
      \kh^{j,k-4}(\mathcal{D}_{2k-1,2}), & j < 2k, \\
      \qqq\{6k+1\} \oplus \qqq\{6k+3\}, & j = 2k, \\
      0, & j > 2k, \\
    \end{array}
  \right.$$
\end{linenomath*}
if $m = 2k+1$.
\end{lemma}
\begin{proof}
Let us denote by $\mathcal{D}_{01}$ one of the diagrams in the middle column of Figure~\ref{fig:T(n,2) bicomplex}. It is not hard to see, that $\mathcal{D}_{01}$ can be oriented in such a way that all crossings are positive, therefore $c(\mathcal{D}_{01})=0$.

Let us denote by $\mathcal{D}'$ the diagram obtained by the closure of the braid from Figure~\ref{fig: T_n_2 D' diag}. Orient it, so that all 
crossings are positive. Notice that $\mathcal{D}_{01}$ and $\mathcal{D}'$ are isotopic. In order to 
prove the lemma, let us compute $\kh(\mathcal{D}';\qqq)$. Let $\mathcal{D}_{0}'$ and $\mathcal{D}_{1}'$ denote $0$-smoothing and $1$-smoothing, respectively, of the crossing marked in Figure~\ref{fig: T_n_2 D' diag}. $\mathcal{D}_{0}'$ is $\mathcal{D}_{m-2}$ with $c(\mathcal{D}_{0}') = 0$.
On the other hand, $\mathcal{D}_{1}'$ represents an unknot and $c(\mathcal{D}_{1}') = m-2$.

Consider first the case $m = 2k$, for $k > 1$. Since $\mathcal{D}_{1}'$ is an unknot diagram there can be only one possibly non-vanishing 
map in the skein exact sequence associated to the crossing marked in Figure~\ref{fig: T_n_2 D' diag}, namely
\begin{linenomath*}
  \begin{equation*}
    \qqq = \kh^{2k-2,6k-6}(\mathcal{D}_{2k-2}) \xrightarrow{\delta} \kh^{0,-1}(\mathcal{D}_1’) = \qqq.
  \end{equation*}
\end{linenomath*}
Suppose that $\delta = 0$ and notice that if $m = 2k$, then $\mathcal{D}'$ represents a knot. It is not 
hard to see, that Khovanov homology of this knot is concentrated only on two diagonals $j-2i = 
2k-3, 2k-1$, regardless of the vanishing of $\delta$. Furthermore, if $\delta$ vanishes then
\begin{linenomath*}
  \begin{equation}
    \khp(\mathcal{D}') = q \khp(\mathcal{D}_{2k-2}) + t^{2k-1}q^{6k-7} + t^{2k-1}q^{6k-5}.
    \label{eqn: khp D'}
  \end{equation}
\end{linenomath*}
On the other hand,~\cite{Lee}*{Theorem 4.4} and~\cite{Rasmussen}*{Proposition 3.3} imply that
\begin{linenomath*}
\begin{equation}
\khp(\mathcal{D}') \equiv q^{s} (q + q^{-1}) \pmod{(1 + tq^{4})},
\label{eqn: khp hthin knot}
\end{equation}
\end{linenomath*}
for some integer $s$. However, from (\ref{eqn: khp D'}) it follows that
\begin{linenomath*}
\begin{align*}
&\khp(\mathcal{D}') \equiv q^{2k-3} + q^{2k-1} + \\
&+ q^{-8k+8}(q^{6k-7} + q^{6k-5}) (1-q^{-4}) \pmod{(1 + tq^{4})},
\end{align*}
\end{linenomath*}
which contradicts \eqref{eqn: khp hthin knot}. Thus, $\delta$ must be nontrivial.

If $m = 2k+1$, there is also only one case to consider, namely
\begin{linenomath*}
$$\qqq = \kh^{2k-1,6k-3}(\mathcal{D}_{2k-1}) \to \kh^{0,-1}(\mathcal{D}_{1}’) = \qqq.$$
\end{linenomath*}
Notice that $\mathcal{D}'$ represents a $2$-component link, whose Khovanov homology is concentrated on two 
diagonals. Once, more by~\cite{Lee}*{Theorem 4.4} and~\cite{Rasmussen}*{Proposition 3.3} we know that
\begin{linenomath*}
\begin{equation}
\khp(\mathcal{D}') \equiv q^{s}(q + q^{-1}) + t^{2} q^{s'}(q+q^{-1}) \pmod{(1+tq^{4})},
\end{equation}
\end{linenomath*}
By argument as in the even case we deduce that $\delta$ must vanish.
\end{proof}

\begin{proof}[Proof of Theorem~\ref{thm: computation equiv Kh T_n_2}]
The proof is inductive. The base case was done in~\eqref{eq:eqviariant-khp-trivial-rep} and~\eqref{eq:equivariant-khp-sign-rep}.

In order to perform the inductive step we will consider two cases.
First, let $m = 2k+1$. From Lemmas~\ref{lemma: equiv kh comp 1} and~\ref{lemma: equiv kh comp 3} we obtain the $E_{1}$ page of the spectral sequence $E_{\ast}^{\ast,\ast,\ast}(\qqq_{-})$, which is depicted in Figure~\ref{fig: E_1 page equiv kh T_2n+1_2}. Since
\begin{linenomath*}
\begin{align*}
\kh^{2k+1,\ast}(\mathcal{D}_{2k+1}) &= \qqq\{6k+3\}, \\
\kh^{2k,\ast}(\mathcal{D}_{2k+1}) &= \qqq\{6k-1\},
\end{align*}
\end{linenomath*}
it follows from~\eqref{eq:kh-torus-link} that the differential
\begin{linenomath*}
$$d_{1}^{0,2k} \colon {}_{2}E_{1}^{0,2k,\ast} \to {}_{2}E_{1}^{1,2k,\ast}$$
\end{linenomath*}
is an isomorphism. Analogous argument using comparison of gradings of $E_{1}^{1,k}(\qqq_{-})$ and 
$\kh^{k+1,\ast}(\mathcal{D}_{2k+1})$ yield that $E_{2}^{\ast,\ast,\ast}(\qqq_{-})$ is zero. Thus, 
\begin{linenomath*}
$$\kh_{\z{2}}^{\ast,\ast,2}(\mathcal{D}_{2k+1};\qqq) = 0,$$
\end{linenomath*}
and consequently by~(\ref{eq:equivariant-nonequivariant-kh}) we obtain
\begin{linenomath*}
$$\kh_{\z{2}}^{\ast,\ast,1}(\mathcal{D}_{2k+1};\qqq) = \kh^{\ast,\ast}(\mathcal{D}_{2k+1}).$$
\end{linenomath*}

Consider now the case $m=2k$. The $E_{1}$ page of the respective skein spectral sequence is presented in 
Figure~\ref{fig: E_1 page equiv kh T_2n+1_2}. Comparison of gradings of 
$E_{1}^{\ast,\ast,\ast}(\qqq_{-})$ and gradings of $\kh^{\ast,\ast}(\mathcal{D}_{2k-2})$ yields that the only 
nonzero entry of $E_{2}^{\ast,\ast,\ast}(\qqq_{-})$ is 
\begin{linenomath*}
  $$E_{2}^{1,2k-1,6k}(\qqq_{-}) = \qqq.$$
\end{linenomath*}
Therefore,
\begin{linenomath*}
  $$\kh_{\z{2}}^{\ast,\ast,2}(\mathcal{D}_{2k};\qqq) = \qqq[2k]\{6k\}.$$
\end{linenomath*}
The desired conclusion follows from~\eqref{eq:equivariant-nonequivariant-kh}.

\begin{figure}
\centering
\begin{tikzpicture}
\draw[xstep=4cm, ystep=6cm, gray!30, very thick] (-1.5, -1) grid (8.7, 6.7);
\draw[gray!30, very thick] (-1.5,4) -- (8.7,4);

\draw[->, black, very thick] (-1.5, -1.5) -- (-1.5, 7);
\draw[->, black, very thick] (-2, -1) -- (9, -1);

\node at (0, -1.25) {$0$};
\node at (4, -1.25) {$1$};
\node at (8, -1.25) {$2$};

\node at (-1.75, 0) {$0$};
\node at (-1.75, 2) {$\vdots$};
\node at (-2.25, 4) {$2k-1$};
\node at (-2.25, 6) {$2k$};

\node[fill=white, inner sep=5pt, align=center] at (0, 6) {$\qqq\{6k+3\}$ \\ $\oplus$ \\ 
$\qqq\{6k+1\}$};
\node[fill=white, inner sep=5pt, align=center] at (4, 6) {$\qqq\{6k+3\}$ \\ $\oplus$ \\
$\qqq\{6k+1\}$};
\node[draw, black, very thin, fill=white, inner sep=5pt, minimum height=5cm] at (4,2) 
{$\kh(\mathcal{D}_{2k-1})\{4\}$};
\node[draw, black, very thin, fill=white, inner sep=5pt, minimum height=5cm] at (8,2) 
{$\kh(\mathcal{D}_{2k-1})\{4\}$};
\end{tikzpicture}
\caption{$E_{1}^{\ast,\ast,\ast}(\qqq_{-})$ of $\mathcal{D}_{2k+1}$.}
\label{fig: E_1 page equiv kh T_2n+1_2}
\end{figure}

\begin{figure}
\centering
\begin{tikzpicture}
\draw[xstep=4cm, ystep=6cm, gray!30, very thick] (-1.5, -1) grid (8.7, 6.7);
\draw[gray!30, very thick] (-1.5,5) -- (8.7,5);
\draw[gray!30, very thick] (-1.5,4) -- (8.7,4);

\draw[->, black, very thick] (-1.5, -1.5) -- (-1.5, 7);
\draw[->, black, very thick] (-2, -1) -- (9, -1);

\node at (0, -1.25) {$0$};
\node at (4, -1.25) {$1$};
\node at (8, -1.25) {$2$};

\node at (-1.75, 0) {$0$};
\node at (-1.75, 2) {$\vdots$};
\node at (-2.25, 4) {$2m-3$};
\node at (-2.25, 5) {$2m-2$};
\node at (-2.25, 6) {$2m-1$};

\node[fill=white, inner sep=5pt] at (4, 6) {$\qqq\{6m\}$};
\node[fill=white, inner sep=5pt] at (4, 5) {$\qqq\{6m-4\}$};
\node[fill=white, inner sep=5pt] at (8, 5) {$\qqq\{6m-4\}$};
\node[draw, black, very thin, fill=white, inner sep=5pt, minimum height=5cm] at (4,2) 
{$\kh(T(2m-2,2))\{4\}$};
\node[draw, black, very thin, fill=white, inner sep=5pt, minimum height=5cm] at (8,2) 
{$\kh(T(2m-2,2))\{4\}$};
\end{tikzpicture}
\caption{${}_{2}E_{1}^{\ast,\ast,\ast}$ of $T(2m,2)$.}
\label{fig: E_1 page equiv kh T_2n_2}
\end{figure}
\end{proof}

\appendix

\section{Properties of cyclotomic rings}
\label{sec: preliminaries}

\begin{defn}\label{defn:cyclotomic_polynomials}
Let $m$ be a positive integer, then the \textit{$m$-th cyclotomic polynomial} is the following polynomial
\begin{linenomath*}
$$\Phi_{m}(X) = \prod_{\stackrel{1 \leq a \leq m-1}{\gcd(a,m)=1}} \left(X - \xi_{m}^{a}\right) \in \zzz[X].$$
\end{linenomath*}
Moreover, we \emph{Euler’s totient function} can be defined as
\[\varphi(m) = \deg \Phi_m(X) = \#\{a \colon 1 \leq a \leq m, \quad \gcd(a,m)=1\}.\]
\end{defn}

Let also $d$ be a divisor of $m$. Define the following polynomial
\begin{linenomath*}
$$\Psi_{m,d}(X) = \frac{X^{m}-1}{\Phi_{d}(X)}.$$  
\end{linenomath*}

\begin{prop}\label{prop: properties of ext rings of cylotomic rings}
Let $m$ be a positive integer and $d$ its divisor, then there exists an isomorphism of graded rings
\begin{linenomath*}
$$\ext{\ast}{\gpRing{\z{m}}}{\gpRing{\xi_{d}}}{\gpRing{\xi_{d}}} \cong \gpRing{\xi_{d}}[T_{d}] / (\Psi_{m,d}(\xi_{d}) T_{d}),$$
\end{linenomath*}
where $T_{d} \in \ext{2}{\gpRing{\z{m}}}{\gpRing{\xi_{d}}}{\gpRing{\xi_{d}}}$ is represented by the following Yoneda extension
\begin{linenomath*}
$$0 \to \cyclotomicRing{d} \to \cyclicGroupRing{m} \stackrel{\Phi_{d}(t)}{\longrightarrow} \cyclicGroupRing{m} \to \cyclotomicRing{d} \to 0,$$  
\end{linenomath*}
where $t$ denotes a generator of $\z{m}$. In particular
\begin{linenomath*}
\begin{align*}
\ext{2i}{\cyclicGroupRing{m}}{\cyclotomicRing{d}}{\cyclotomicRing{d}} &= \left \{
\begin{array}{ll}
\z{m}, & i > 0, d=1, \\
A(m,d), & i > 0, d > 0, \\
\cyclotomicRing{d}, & i = 0, \\
\end{array}\right. \\
\ext{2i+1}{\cyclicGroupRing{m}}{\cyclotomicRing{d}}{\cyclotomicRing{d}} &= 0.
\end{align*}  
\end{linenomath*}
where $A(m,d)$ is a finite abelian group of order $\frac{m \cdot rad(d)}{d}$, where $rad(d)$ denotes the product of all primes dividing $d$.
\end{prop}
\begin{proof}
The first isomorphism 
\begin{linenomath*}
$$\ext{\ast}{\gpRing{\z{m}}}{\gpRing{\xi_{d}}}{\gpRing{\xi_{d}}} \cong \gpRing{\xi_{d}}[T_{d}] / (\Psi_{m,d}(\xi_{d}) T_{d}),$$  
\end{linenomath*}
can be verified using \cite{Wall}*{Lemma 1.1} and the following $2$-periodic projective resolution of $\cyclotomicRing{d}$ over $\cyclicGroupRing{m}$
\begin{linenomath*}
$$\ldots \xrightarrow{\Phi_{d}(t)} \cyclicGroupRing{m} \xrightarrow{\Psi_{m,d}(t)} \cyclicGroupRing{m} \xrightarrow{\Phi_{d}(t)} \cyclicGroupRing{m} \xrightarrow{} \cyclotomicRing{d}.$$  
\end{linenomath*}
To prove the second part, notice that if we apply the M\"obius inversion formula, see \cite{IrelandRosen}*{Theorem 2}, to the equality
\begin{linenomath*}
  $$X^{d}-1 = \prod_{d' \mid d} \Phi_{d'}(X)$$
\end{linenomath*}
we obtain
\begin{linenomath*}
$$\Phi_{d}(X) = \prod_{d' \mid d} (X^{d'}-1)^{\mu(d/d')},$$  
\end{linenomath*}
where $\mu$ is the M\"obius function
\begin{linenomath*}
$$\mu(n) = \left\{
  \begin{array}{ll}
    0, & p^{2} \mid n \\
    1, & n = p_{1} \cdot p_{2} \cdot \ldots \cdot p_{2k}, \\
    -1,& n = p_{1} \cdot p_{2} \cdot \ldots \cdot p_{2k+1}, \\ 
  \end{array}
\right.
$$  
\end{linenomath*}
for $k \geq 0$ and $p, p_{i}$ some primes.
Now
\begin{linenomath*}
\begin{align*}
  \Psi_{m,d}(\xi_{d}) &= \lim_{z \to \xi_{d}} \frac{z^{n}-1}{\Phi_{d}(z)}  = \prod_{\stackrel{d' \mid d}{d' < d}} (\xi_{d}^{d/d'}-1)^{-\mu(d')} \lim_{z \to \xi_{d}} \frac{z^{m}-1}{z^{d}-1} = \\
                      &= \frac{m}{d} \prod_{\stackrel{d' \mid d}{d' < d}} (\xi_{d'}-1)^{-\mu(d')}.
\end{align*}  
\end{linenomath*}
Since $\mu(d') = 0$ whenever $d'$ is not square-free, we can restrict the above product to square-free divisors $d'$. Moreover, notice that whenever $d'$ is not prime, then $\xi_{d'}-1$ is invertible, therefore the principal ideal $(\Psi_{m,d}(\xi_{d}))$ can be written as the following product
\begin{linenomath*}
$$\left( \Psi_{m,d}(\xi_{d})\right) = \left(\frac{m}{d}\right) \cdot \prod_{p \mid d} \left (\xi_{p}-1\right).$$  
\end{linenomath*}
Notice also that for any prime $p$ we have $\xi_{p}-1 \mid p$. This implies that the order of the abelian group $\cyclotomicRing{d}/(\Psi_{m,d}(\xi_{d}))$ is equal to $\frac{m \cdot rad(d)}{d}$.
\end{proof}

\begin{prop}
  \label{prop:ext-rings-d-1}
  Let \(m\) and \(d\) be as in the previous proposition. We have
  \begin{linenomath*}
    \begin{align*}
      \Ext{2i+1}{\gpRing{\z{m}}}{\gpRing{\xi_d}}{\zzz} &=
                                                         \begin{cases}
                                                           \fff_{p}, & \text{if } d = p^{\alpha} \text{ is a prime power}, \\
                                                           0,     & \text{otherwise}.
                                                         \end{cases} \\
      \Ext{2i}{\gpRing{\z{m}}}{\gpRing{\xi_d}}{\zzz} &= 0.
    \end{align*}
  \end{linenomath*}
\end{prop}
\begin{proof}
  Consider the following periodic projective resolution
  \begin{linenomath*}
    $$\ldots \xrightarrow{} \gpRing{\z{m}} \xrightarrow{\Psi_{n,d}(t)} \gpRing{\z{m}} \xrightarrow{\Phi_d(t)} \gpRing{\z{m}} \xrightarrow{\Psi_{n,d}(t)} \gpRing{\z{m}} \xrightarrow{\Phi_d(t)} \gpRing{\z{m}}$$
  \end{linenomath*}
  of the cyclotomic module $\gpRing{\xi_d}$. Application of the functor $\Hom{}{\gpRing{\z{m}}}{-}{\zzz}$ to the resolution above yields the following complex
  \begin{linenomath*}
    $$\ldots \xleftarrow{} \zzz \xleftarrow{\Psi_{n,d}(1)} \zzz \xleftarrow{\Phi_d(1)} \zzz \xleftarrow{\Psi_{n,d}(1)} \zzz \xleftarrow{\Phi_d(1)} \zzz.$$
  \end{linenomath*}
  The definition of $\Psi_{n,d}(t)$ implies that it is divisible by \(t-1\), for \(d>1\), hence $\Psi_{n,d}(1) = 0$. Moreover
  \begin{linenomath*}
    $$\Phi_d(1) =
    \begin{cases}
      p, & \text{if } d = p^{\alpha} \text{ is a prime power}, \\
      1, & \text{otherwise}.
    \end{cases}$$
  \end{linenomath*}
\end{proof}

\bibliography{biblio,biblio2}

\end{document}